\newtheorem{theorem}{Theorem}
\newtheorem{lemma}[theorem]{Lemma}
\newtheorem{proposition}[theorem]{Proposition}
\newtheorem{conjecture}{Conjecture}
\newtheorem{question}[conjecture]{Question}
\newtheorem*{claim*}{Claim}
\theoremstyle{definition}
\newtheorem*{definition*}{Definition}
\newtheorem{remark}[theorem]{Remark}
\newtheorem{example}{Example}
\newcommand\ns[1]{ \left\{ {#1} \right\} }
\DeclareMathOperator{\metric}{d}
\DeclareMathOperator{\metricstar}{d^{*}}
\newcommand{\Z}{{\mathbb Z}}
\newcommand{\R}{{\mathbb R}}
\newcommand{\N}{{\mathbb N}}
\newcommand{\T}{{\mathsf{T}}}
\newcommand{\e}{\varepsilon}
\newcommand\X{\Omega}
\newcommand\hx{\hat{x}}
\newcommand\garbage[1]{}
\newcommand\kinitial{n_{\mathrm{initial}}}
\newcommand\kblock{{k_{\mathrm{group}}}}
\newcommand\kmark{{k_{\mathrm{mark}}}}
\newcommand\krmark{{r_{\mathrm{mark}}}}
\newcommand\miss{{{\mathrm{miss}}}}
\newcommand\rep{{{\mathrm{rep}}}}
\newcommand\type{{{\mathrm{type}}}}
\renewcommand{\P}{{\mathbb P}}
\newcommand{\nset}[1]{[#1]}
\newcommand{\dff}[1]{\textbf{\emph{#1}}}
\newcommand{\erk}{\hfill \ensuremath{\Diamond}} 
\newcommand\ellstar{\ell^*}
\begin{document}

\title[Monotone isomorphisms]{A monotone isomorphism theorem}
\author[T. Soo]{Terry Soo}

\address{Department of Mathematics,
University of Kansas,
405 Snow Hall,
\indent 1460 Jayhawk Blvd,
Lawrence, Kansas 66045-7594}

\email{tsoo@ku.edu}
\urladdr{http://www.math.ku.edu/u/tsoo}
\thanks{Funded in part by a New Faculty General Research Fund}

\keywords{Sinai factor theorem, Ornstein theorem,  stochastic domination,
monotone coupling, Burton--Rothstein}
\subjclass[2010]{37A35, 60G10, 60E15}

\dedicatory{Dedicated to Professor Andr\'es del Junco, September 21, 1948 -- June 17, 2015}

\begin{abstract}
 In the simple case of a Bernoulli shift on two symbols, zero and one, by  permuting the symbols, it is obvious that any two equal entropy  shifts are isomorphic.  We show that the isomorphism can be realized by a factor that maps a binary sequence to another that is coordinatewise smaller than or equal to the original sequence.  
\end{abstract}

\maketitle

\section{Introduction}

Let $N$ be a positive integer, $\nset{N} = \ns{0,1, \ldots, N-1}$, and $\X = \nset{N}^{\Z}$.   Let $T: \X \to \X$ be the left-shift 
given by $(T x)_{i} = x_{i+1}$ for all $i \in \Z$.   Given a  probability measure $\mathbf{p}$   on $\nset{N}$, we call  
$B(\mathbf{p}) = (\X, \mathbf{p}^\Z, T)$ a \dff{Bernoulli-shift} on $N$ symbols.  We say that a Bernoulli shift $B(\mathbf{q})$ 
is \dff{factor} of $B(\mathbf{p})$, if there exists a measurable map $\phi: \X \to \X$ such that the push-forward of 
$\mathbf{p}^{\Z}$ under $\phi$ is $\mathbf{q}^{\Z}$ and $\phi \circ T = T \circ \phi$ on a subset of $\X$ with  
$\mathbf{p}^{\Z}$-full measure; we also call the map  $\phi$  a \dff{factor} from $B(\mathbf{p})$ to $B(\mathbf{q})$.  We say 
that the  Bernoulli shifts $B(\mathbf{p})$ and $B(\mathbf{q})$ are \dff{isomorphic} if there exists  a factor map $\phi$ from 
$B(\mathbf{p})$ to $B(\mathbf{q})$ such that its inverse $\phi^{-1}$ serves as  factor map from $B(\mathbf{q})$ to 
$B(\mathbf{p})$; in this case, we call $\phi$ an \dff{isomorphism} of $B(\mathbf{p})$ and $B(\mathbf{q})$.    A factor map 
$\phi$ is \dff{monotone} if for all $x \in \X$, we have  $\phi(x)_i \leq x_i$ for all $i \in \Z$.

\begin{theorem}
\label{mresult}
If  $p \in (\tfrac{1}{2},1)$,  then there  exists a monotone isomorphism of $B(1-p,p)$ and $B(p,1-p)$.     
\end{theorem}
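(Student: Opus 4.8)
The plan is to combine the Burton--Rothstein approach to Ornstein theory with an explicit monotone coupling at the level of finite blocks. The key observation is that for $p \in (\tfrac12,1)$, the measures $\mathbf{p} = (1-p,p)$ and $\mathbf{q} = (p,1-p)$ on $\{0,1\}$ satisfy $\mathbf{q} \precd \mathbf{p}$ in the stochastic ordering, so there is a monotone coupling of a single coordinate; but a coordinatewise-monotone \emph{isomorphism} must be far more rigid than just a monotone factor, since it must also admit a monotone inverse in the appropriate sense. I would first reduce Theorem~\ref{mresult} to producing, for each $\e > 0$, a monotone factor $\phi_\e$ from $B(\mathbf{p})$ to a process that is within $\e$ in $\metricbar$-distance of $B(\mathbf{q})$, while simultaneously controlling the ``loss'' of information so that the factor is eventually a near-isomorphism; then pass to a limit. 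Because both shifts are Bernoulli of equal entropy, Sinai's theorem already gives a (non-monotone) isomorphism, so the entire content is upgrading this to the monotone category.

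The main tool I would use is a \textbf{monotone marker-filler construction}. Fix a large block length $n$. On a $\mathbf{p}^\Z$-typical sequence $x$, one uses a shift-equivariant collection of markers to parse $\Z$ into blocks; within each block of length roughly $n$ one wants to output a block of $y$-symbols that (i) is coordinatewise $\le$ the corresponding $x$-block, and (ii) has empirical distribution close to $\mathbf{q}^{\otimes n}$. Since $\mathbf{p}$ puts mass $p > \tfrac12$ on the symbol $1$, a $\mathbf{p}$-typical block has about $pn$ ones and $(1-p)n$ zeros, whereas a $\mathbf{q}$-typical block has about $(1-p)n$ ones. Coordinatewise domination then forces: keep the zeros of $x$ as zeros of $y$, and flip about $\frac{2p-1}{p}$-fraction of the ones of $x$ down to zeros in $y$, choosing \emph{which} ones to flip using the extra randomness/entropy available in the ordering of symbols inside the block (this is where equal entropy is used --- the entropies match exactly, so asymptotically no information need be destroyed, which is what makes an isomorphism, not merely a factor, attainable). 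To make the inverse monotone as well one symmetrizes: the inverse should read $y$, locate the markers (which must be chosen to be recoverable from $y$), and reconstruct $x$ --- but since $x \ge y$ coordinatewise and $\phi^{-1}$ is a factor from $B(\mathbf{q})$ to $B(\mathbf{p})$ it is \emph{not} required to be monotone; only $\phi$ is. So the real constraint is just: $\phi$ is a bijective (mod $0$) factor that happens to be coordinatewise monotone.

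Concretely, the steps I would carry out are: (1) record the stochastic domination $\mathbf{q}\precd \mathbf{p}$ and the equality of entropies $H(\mathbf{p})=H(\mathbf{q})$; (2) build a Burton--Rothstein-style space of monotone codes and show, via a Baire category / generic-point argument in the space of monotone joinings that are within $\e$ of the graph of an isomorphism, that a monotone isomorphism exists if monotone near-isomorphisms exist for every $\e$; (3) construct the monotone near-isomorphism for fixed $\e$ using the marker-filler scheme above, verifying the $\metricbar$-bound by a law-of-large-numbers computation on block frequencies and verifying near-invertibility by counting that the map is injective on a set of measure $>1-\e$ (using equal entropy so that the ``flip pattern'' inside each block can itself be encoded back into $y$); (4) take a limit of the $\phi_\e$ in the joining topology and check the limiting joining is supported on the graph of a genuine monotone isomorphism.

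The hard part will be step (3): arranging the within-block flip rule so that it is \emph{simultaneously} monotone, measure-preserving onto something $\metricbar$-close to $\mathbf{q}^\Z$, \emph{and} invertible off a small set. Monotonicity ties our hands --- we can only ever turn $1$'s into $0$'s, never the reverse --- so all the entropy that $B(\mathbf{q})$ carries in its $0$-positions (which under $\mathbf{q}$ are the \emph{majority} symbol) has to be manufactured out of the positions where $x$ has a $1$; keeping track of whether there is ``enough room'' inside a typical block, and doing so in a shift-equivariant, marker-measurable way without destroying information, is the crux. I expect that handling the boundary effects at the marker blocks, and making the marker process itself monotone-compatible (the markers for $y$ should be derivable from $y$, yet the markers are chosen while looking at $x$), will require the bulk of the technical work, likely via a multi-stage ``tower'' refinement in the spirit of del Junco's and Ornstein's copying arguments.
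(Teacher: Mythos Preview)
Your overall architecture---Burton--Rothstein Baire category on a space of monotone joinings, combined with a marker-filler scheme---is exactly the route the paper takes, and you are right that only $\phi$, not $\phi^{-1}$, need be monotone. But two concrete ingredients that carry the proof are absent from your sketch, and without them step~(3) does not go through.

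First, the marker problem you flag (``the markers for $y$ should be derivable from $y$, yet the markers are chosen while looking at $x$'') is dissolved, not solved, by a specific observation: if one declares a \emph{primary marker} to be any occurrence of the word $01$, then the push-forward of $(1-p,p)^{\Z}$ and of $(p,1-p)^{\Z}$ under the marker map are \emph{identical} (Keane--Smorodinsky). Hence one can work inside the space $J$ of monotone ergodic joinings \emph{of marker form}, i.e.\ joinings under which $x$ and $y$ almost surely share the same primary markers. Nonemptiness of $J$ then reduces to a finite computation (Lemma~\ref{sd}): on a filler interval of length $n$, both conditional laws live on the totally ordered set $B_n=\{1^{n-\ell}0^{\ell}\}$, and the $p$-version stochastically dominates the $(1-p)$-version by the $p\leftrightarrow 1-p$ duality. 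Your ``flip some ones to zeros using the extra entropy in the block'' is too coarse a description to see this; the point is that the filler conditioning already forces a totally ordered support, so quantile coupling gives monotonicity for free.

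Second, the density step---perturbing an arbitrary $\xi\in J$ to an $\e$-almost factor while staying in $J$---is not achieved by counting or a law-of-large-numbers argument, but by del~Junco's \emph{star-coupling} (your reference to ``del~Junco's copying arguments'' is in the right neighborhood but not specific enough). The star-coupling is a particular way of re-coupling pairs $(X_i,Y_i)$ so that (i) each marginal pair law is preserved exactly---hence monotonicity and marker form are preserved automatically---and (ii) after iterating over many blocks of a given type, the $Y$-coordinates become deterministic functions of the $X$-coordinates off a small set (Proposition~\ref{delJunco}, which uses Shannon--McMillan--Breiman and the equal-entropy hypothesis). This is the mechanism that makes the perturbation an $\e$-almost factor; your proposal to ``encode the flip pattern back into $y$'' has no monotone implementation that I can see, whereas the star-coupling sidesteps the issue by never leaving the monotone-joining space. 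Finally, a minor correction: in the Baire scheme one does not take a limit of maps $\phi_\e$; one shows each $U_\e$ (and each $V_\e$, for the reverse direction) is open and dense in $(J,\metric)$, and any element of $\bigcap_\e U_\e\cap\bigcap_\e V_\e$ \emph{is} a monotone isomorphism.
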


Let us remark that the map defined by  $\phi(x)_i = \mathbf{1}[x_i=0]$ for all $i \in \Z$, which just swaps zeros and ones, is clearly an isomorphism of $B(1-p,p)$ and $B(p,1-p)$.  However, it is not monotone.   

It is  easy  to determine when two Bernoulli shifts are isomorphic via an invariant  introduced by Kolmogorov \cite{MR2342699}, which is non-increasing under factors and preserved under isomorphisms.
The \dff{entropy} of a probability measure $\mathbf{p}=(p_0, \ldots, p_{N-1})$ on $\nset{N}$  is given by  $H(\mathbf{p}): = -\sum_{i=0}^{N-1}{p}_i \log {p}_i$.  Sinai \cite{Sinai, MR2766434}  proved that if $H(\mathbf{p}) \geq H(\mathbf{q})$, then $B(\mathbf{q})$ is factor of $B(\mathbf{p})$, and Ornstein \cite{Ornstein, MR3052869} proved that the  entropies of two Bernoulli shifts are equal if and only if the two Bernoulli shifts are isomorphic.    

Although it is easy to compute the entropy of a Bernoulli shift and to determine whether two Bernoulli shifts are isomorphic, the actual factor map which realizes the isomorphism is in general a much more complicated object.   
In some special cases, the factor map has a simple description \cite{MR0110782, MR0143862}.   The first non-trivial example of an isomorphism is due to Melshalkin \cite{MR0110782}, which also gives a monotone isomorphism.   I thank Zemer Kosloff for  his help with the following example.    
\begin{example}[A classical example due to Melshalkin \cite{MR0110782}]
	\label{mesh}     We will adjust the treatment given in \cite{MR3220671} to ensure monotonicity.  
 Let $\mathbf{p}=(\tfrac{1}{8},\tfrac{1}{8},\tfrac{1}{8},\tfrac{1}{8}, \tfrac{1}{2})$ and $\mathbf{q} = (\tfrac{1}{4},\tfrac{1}{4},\tfrac{1}{4},\tfrac{1}{4},0)$, so that $N=5$, and $\mathbf{p}$ and $\mathbf{q}$ are probability measures on $\nset{N} = \ns{0,1,2,3,4}$.  
Let $x \in \X  =\nset{5}^{\Z}$.  We define a  factor map $\phi: \Omega \to \Omega$ such that  if $x_i=4$, then $\phi(x)_i \in \ns{2,3}$,  if $x_i \in \ns{2,3}$, then $\phi(x)_i = 1$, and if $x_i \in \ns{0,1}$, then $\phi(x)_i = 0$.    
%
%
%
%

It remains to specify what happens when $x_i=4$.  
  Think of every $x_i=4$ as a right parenthesis, and think of every $x_i \not =4$ as a left parenthesis.    Ergodicity implies that every parenthesis will be matched legally  almost surely.  If  $x_i=4$, then let  $j$ be the position of the corresponding left parenthesis.   If $x_j$ is odd, then we set $\phi(x)_i =3$,  if $x_j$ is even, then we set $\phi(x)_i =2$. 
%

      By definition, the map $\phi$ satisfies $\phi \circ T = T \circ \phi$ and is monotone.    Melshalkin proved that $\phi$ is an isomorphism of $B(\mathbf{p})$ and $B(\mathbf{q})$. \erk
\end{example}

It is easy to see that a necessary condition for the existence of a monotone factor from $B(\mathbf{p})$ to $B(\mathbf{q})$ is that there exists a \dff{monotone coupling} of $\mathbf{p}$ and $\mathbf{q}$; that is, a probability measure $\mathbf{\rho}$ on $\nset{N} \times \nset{N}$ such that $\mathbf{\rho}(\cdot, \nset{N}) = \mathbf{p}$,  $\mathbf{\rho}(\nset{N}, \cdot) = \mathbf{q}$, and $\mathbf{\rho}\ns{(n,m):  n\geq m}=1$.   By  Strassen's theorem \cite{Strassen}, the existence of a monotone coupling is equivalent to the condition that $\sum_{i=0}^k{p}_i \leq  \sum_{i=0}^k {q}_i$ for all
$0 \leq k < N$, in which case we say that   $\mathbf{p}$ \dff{stochastically dominates} $\mathbf{q}$.

\begin{theorem}[Quas and Soo \cite{Qsc}]
\label{old}
Let $\mathbf{p}$ and $\mathbf{q}$ be probability measures on $\nset{N}$.  If $\mathbf{p}$ stochastically dominates $\mathbf{q}$ and $H(\mathbf{p})$ is strictly greater than $H(\mathbf{q})$, then there exists a monotone factor from $B(\mathbf{p})$ to $B(\mathbf{q})$.  
\end{theorem}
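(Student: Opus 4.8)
The plan is to adapt the Burton--Rothstein method for producing factor maps, performing it inside the space of \emph{monotone} joinings so that the resulting factor is monotone by construction.  First I invoke Strassen's theorem: since $\mathbf p$ stochastically dominates $\mathbf q$ there is a monotone coupling $\rho$ of $\mathbf p$ and $\mathbf q$, whence the i.i.d.\ joining $\rho^{\Z}$ is a shift-invariant coupling of $B(\mathbf p)$ and $B(\mathbf q)$ concentrated on the closed, shift-invariant set $\Delta=\ns{(x,y)\in\X\times\X:\ y_i\le x_i\ \text{for all }i\in\Z}$.  Hence the set
\[
  J=\ns{\lambda:\ \lambda\text{ a shift-invariant coupling of }\mathbf p^{\Z}\text{ and }\mathbf q^{\Z}\text{ with }\lambda(\Delta)=1}
\]
is nonempty; being convex and weak-$*$ closed in the space of all joinings of the two systems, it is a compact, completely metrizable space, so the Baire category theorem is available on it.

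Now a monotone factor from $B(\mathbf p)$ to $B(\mathbf q)$ is exactly a \emph{graph joining} in $J$: a $\lambda\in J$ under which the second coordinate agrees, off a $\mathbf p^{\Z}$-null set, with a shift-equivariant measurable function of the first.  So it suffices to find one graph joining in $J$, and I would do this by showing that the set $\mathcal G\subseteq J$ of graph joinings is a dense $G_\delta$.  That $\mathcal G$ is $G_\delta$ is straightforward: $\lambda\in\mathcal G$ iff for every $m$ there exist $n$ and a function $g$ of the coordinates $x_{-n},\dots,x_n$ with $\lambda(y_0\ne g(x))<1/m$; each such condition is open in the joining topology (it depends continuously on finitely many cylinder probabilities), and the countable intersection over $m$ is precisely $\mathcal G$, since by shift-invariance $\mathcal B_X$-measurability of $y_0$ forces it for every $y_n$.

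The real work is density.  Given $\lambda\in J$ and $\e>0$, I must find $\lambda'\in J$ within $\e$ of $\lambda$ under which $y_0$ is well approximated by a finite coordinate function of $x$.  This is where the strict inequality $H(\mathbf p)>H(\mathbf q)$ is used: relative to the first coordinate, which is a genuine Bernoulli-$\mathbf p$ source of rate $H(\mathbf p)$, the second-coordinate process carries at most $H(\mathbf q)$ bits per symbol, so there is a positive surplus $H(\mathbf p)-H(\mathbf q)$ of randomness inside $x$ itself with which to simulate the conditional distribution of $y$ given $x$.  Over a Rokhlin tower with a long base block one re-codes $\lambda$: on all but an $\e$-fraction of the coordinates in each block, $y$ is recomputed by a bounded-complexity function of $x$ built from this surplus, while on the exceptional part $y$ is left essentially alone; the usual estimates then show $\lambda'$ is again a joining, is within $\e$ of $\lambda$, and lies in the prescribed open set.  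The genuinely new point --- and the step I expect to be the main obstacle --- is that every modification of the $y$-coordinate must respect $y_i\le x_i$, i.e.\ the re-coding may move $y_i$ only within the $x$-dependent alphabet $\ns{0,1,\dots,x_i}$.  I would handle this by carrying out the re-coding fiberwise over $x$: Strassen's coupling $\rho$ already supplies, for $\mathbf p$-a.e.\ symbol value, a probability distribution on the fiber $\ns{0,\dots,x_0}$ whose aggregate is the correct $\mathbf q$-marginal, so the conditional law being simulated lives on exactly these fibers and the constraint is automatically preserved; the entropy bookkeeping is unaffected because feasibility of the pointwise constraint was assured from the outset.

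Granting this fiberwise perturbation lemma, $\mathcal G$ is a dense $G_\delta$ in the complete metric space $J$, hence nonempty, and any $\lambda\in\mathcal G$ is the graph of the required monotone factor.  An essentially equivalent organization of the same argument is the following relative one: it is enough to construct a relative factor from $B(\mathbf p)$ onto the Bernoulli shift $B(\rho)$ over the $\mathbf p$-coordinate, since composing such a map with the projection onto the $\mathbf q$-coordinate is then automatically monotone; this reduces Theorem~\ref{old} to a relative Sinai-type statement over a Bernoulli base, for which the gap $H(\mathbf p)>H(\mathbf q)$ is once more exactly what is needed.
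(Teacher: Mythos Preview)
This theorem is quoted from \cite{Qsc} and is not proved in the present paper; the paper does, however, indicate how the proof in \cite{Qsc} goes (see Remark~\ref{KSdel} and the sentence following Proposition~\ref{delJunco}): a Burton--Rothstein Baire-category argument in a space of monotone joinings, in which the density step is executed with del~Junco's star-coupling together with the absolutely-continuous marriage coupling of Keane and Smorodinsky.

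Your overall architecture---Baire category on a space of monotone joinings, with almost-factors forming a dense $G_\delta$---matches \cite{Qsc}, and your $G_\delta$ argument is fine. The genuine gap is exactly where you yourself locate it: the density step. Your sketch (``over a Rokhlin tower\dots re-code fiberwise using the surplus entropy'') does not supply a concrete mechanism that simultaneously (i) preserves both marginals, (ii) keeps the joining supported on $\Delta$, (iii) stays $\e$-close to the given $\lambda$, and (iv) makes $y_0$ an approximate function of $x$. The fiberwise idea addresses (ii) one coordinate at a time, but the $\mathbf q^{\Z}$-marginal is a \emph{joint} constraint on the $y$-block; sampling each $y_i$ from the $\rho$-conditional given $x_i$ neither reproduces that marginal nor brings you close to $\lambda$, and ``the usual estimates'' do not exist for this step without further input. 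In \cite{Qsc} the perturbation is built on markers read from the process itself (not abstract Rokhlin towers), and on each block the re-coding uses the star-coupling to bound the number of split elements, while the Keane--Smorodinsky marriage theorem (Remark~\ref{KSdel}) guarantees that the new block law is absolutely continuous with respect to the old one---and this absolute continuity is precisely what carries monotonicity through the perturbation. The strict inequality $H(\mathbf p)>H(\mathbf q)$ enters via the Shannon--McMillan--Breiman counting inside the star-coupling estimate. Your closing reformulation as a relative Sinai theorem over the $\mathbf p$-coordinate is correct as a restatement, but it is the same problem in different clothing and is no easier. In short, ``granting this fiberwise perturbation lemma'' is granting essentially the whole theorem.
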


 Karen Ball \cite{Ball} proved Theorem \ref{old} in the case that the measure $\mathbf{q}$ is supported on two symbols.  In both of  those papers,  a strict entropy inequality is required.  In this paper, we treat the case of equal entropy,  in the special case where there are only two symbols, zero and one,  in each of the Bernoulli shifts.    The methods used to prove  Theorem \ref{mresult} can also be adapted to produce monotone factors in other  specific cases, but we do not know the answer to the following question.

\begin{question}  
Let $\mathbf{p}$ and $\mathbf{q}$ be probability measures on $\nset{N}$ such that $\mathbf{p}$ stochastically dominates $\mathbf{q}$ and $H(\mathbf{p}) = H(\mathbf{q})$.   Does there exists a monotone factor from $B(\mathbf{p})$ to $B(\mathbf{q})$?
\end{question}

Russell Lyons \cite{Ball}  first posed the   question of whether a monotone factor exists between two Bernoulli shifts.   The requirement of monotonicity makes defining  maps more difficult.  In a  related problem, Gurel-Gurevich and Peled \cite[Theorem 1.3]{GGP} proved that for $p \in (\tfrac{1}{2}, 1)$ there exists a  monotone map $\phi:\ns{0,1}^{\Z} \to \ns{0,1}^{\Z}$ such that the product measure $(p, 1-p)^{\Z}$ is the push-forward of $(1-p, p)^{\Z}$ under $\phi$; however,  their map is  \emph{not} be equivariant; that is, it does not satisfy $\phi\circ T = T \circ \phi$.      

See \cite{MR0304616, MR3052869, MR2342699} for more information on entropy and the isomorphism problem in ergodic theory. 
See \cite{Qsc} and \cite{iidtrees}  for background on   factors in probability theory.

The proof of Theorem \ref{mresult} will involve some of the methods of  \cite{Qsc}, which in turn combines ideas from various treatments of the Ornstein and Sinai factor theorems  given by  Keane and Smorodinsky \cite{keanea, keaneb},   Burton  and Rothstein \cite{BurKeaSer,BurRot}, del Junco \cite{Juncoa, Junco}, and Ball \cite{Ball}.   We  briefly summarize some of the main  features and differences  in their proofs.  Keane and Smorodinsky, and Ball employed a marker-filler method and a version of  Hall's marriage theorem (see Remark \ref{KSdel}).   Del Junco  also employed a marker-filler method, but he replaced the marriage lemma with his star-coupling (see Section \ref{couplings}).   These constructions are explicit and they exhibit factor maps that are finitary--an almost surely continuity property (see \cite{MR2306207} for details).      In a somewhat more abstract approach, Burton and Rothstein proved  that in a suitably defined metric space, the set of all  factors is a residual set, in the sense of the Baire category theorem.  This was the approach taken in \cite{Qsc}, and will also be the approach we take here.  

\section*{Dedication}

I never had the pleasure of meeting Professor del Junco, but I wrote to him in December 2013 about Theorem \ref{old} with a preprint of \cite{Qsc}.  He wrote back the same day saying he was glad that an old idea of  his had found another application and that he  always felt that the star-coupling  was one of his best ideas.  

His coupling was a key  feature in our proof of Theorem \ref{old}, and will also be a star feature in the proof of Theorem \ref{mresult}.

\section{Coupling and Stochastic domination}

Strassen's theorem \cite{Strassen} holds in the much more general setting of a partially ordered  Polish space.  The proof,  even in the case of a finite set is non-trivial, see for example \cite[Theorem 10.4]{LP:book}.  However, in the special case of real-valued random variables or random variables taking values on a finite totally ordered set,
 the proof is easily obtained using a simple coupling of random variables.  

\subsection{Quantiles}

Let $X$ be a real-valued random variable, with cumulative distribution function or law given by $F(z)=F_X(z):= \P(X \leq z)$ for all $z \in \R$.  Define the generalized inverse of $F$ via $F^{-1}(y) := \sup\ns{x \in \R: F(x) <y}$.  Let $U$ be  uniformly distributed in $[0,1]$, so that 
$F_U(z) = z$ 
for all $z \in [0,1]$.   We call $F_X^{-1}(U)$ the \dff{quantile representation} of $X$. It is easy to see that   the random variable $F_X^{-1}(U)$ has the same law  as $X$.  When we define random variables using the quantile representation sometimes we will refer to the random variable $U$ as the  \dff{randomization}; often $U$ will be chosen to be independent of any previously defined random variables.      

If  $X$ and $Y$ are two real-valued random variables, we say that $X$ \dff{stochastically dominates} $Y$ if $\P(X \leq z) \leq \P(Y \leq z)$ for all $z \in \R$.  A \dff{coupling} of $X$ and $Y$ is a pair of random variables $(X', Y')$ defined on the same probability space such that $X'$ has the same law as $X$ and $Y'$ has the same law as $Y$.   Let $U$ be uniformly distributed in $[0,1]$.  If we set $X':= F_X^{-1}(U)$ and $Y':=F_Y^{-1}(U)$,  then the \dff{quantile coupling} of $X$ and $Y$ is given by $(X', Y')$.       We say that the coupling $(X',Y')$ is \dff{monotone} if $X' \geq Y'$.  Strassen's theorem implies that $X$ stochastically dominates $Y$ if and only if there exists a monotone coupling of $X$ and $Y$.    Clearly, the existence of a monotone coupling implies stochastic domination; on the other hand, it is easy to see that the quantile coupling is monotone under the assumption of stochastic domination.

Let us remark that stochastic domination and the quantile coupling are also similarly defined in the case that the random variables take values in a finite totally ordered space.

\begin{lemma}[Strassen's theorem via the quantile coupling]
\label{easyS}
Let $X$ and $Y$ be real-valued random variables or random variables taking values in a finite totally ordered space.  If $(X',Y')$ is a quantile coupling of $X$ and $Y$, then 
$X'$ is almost surely greater than or equal to $Y'$ 
if and only if $X$ stochastically dominates $Y$.  
\end{lemma}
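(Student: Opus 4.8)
The plan is to reduce the lemma to the elementary fact that passing from a cumulative distribution function to its generalized inverse reverses the pointwise partial order, and then to note that the exceptional event on which the coupling could fail to be monotone is contained in $\ns{U \in \ns{0,1}}$, which is null.

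The implication ``$X'$ is almost surely at least $Y'$ $\Rightarrow$ $X$ stochastically dominates $Y$'' is the trivial half and does not use the quantile structure: for any $z$ one has $\ns{X' \leq z} \subseteq \ns{Y' \leq z}$ up to a null set, whence $\P(X \leq z) = \P(X' \leq z) \leq \P(Y' \leq z) = \P(Y \leq z)$.

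For the converse, suppose $F_X(z) \leq F_Y(z)$ for all $z$. I would first record the set inclusion $\ns{x : F_Y(x) < y} \subseteq \ns{x : F_X(x) < y}$, valid for every $y$ and immediate from $F_X \leq F_Y$. For $y \in (0,1)$ both sets are nonempty and bounded above, since $F_X$ and $F_Y$ tend to $0$ at $-\infty$ and to $1$ at $+\infty$; taking suprema in the inclusion gives $F_Y^{-1}(y) \leq F_X^{-1}(y)$. Because $\P(U \in (0,1)) = 1$, substituting $y = U$ yields $Y' = F_Y^{-1}(U) \leq F_X^{-1}(U) = X'$ almost surely, and since $F_X^{-1}(U) \eqd X$ and $F_Y^{-1}(U) \eqd Y$ this says precisely that the quantile coupling is monotone.

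For random variables valued in a finite totally ordered space, one relabels the space as $\ns{0,1,\ldots,N-1}$ with the usual order and regards the variables as real-valued; the cumulative distribution functions become step functions, the generalized inverse is given by the same supremum, and the displayed inclusion together with the limiting behaviour at $\pm\infty$ holds verbatim, so the argument above applies unchanged. I do not anticipate a real obstacle here; the only point requiring a little care is to confirm that for $y \in (0,1)$ the quantiles $F_X^{-1}(y)$ and $F_Y^{-1}(y)$ are genuine real numbers, so that the null event $\ns{U \in \ns{0,1}}$ may be discarded without affecting the conclusion.
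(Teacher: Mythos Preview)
Your proof is correct. The paper does not actually give a formal proof of this lemma: it simply remarks, in the paragraph preceding the statement, that ``clearly, the existence of a monotone coupling implies stochastic domination; on the other hand, it is easy to see that the quantile coupling is monotone under the assumption of stochastic domination,'' and then states the lemma without further argument. Your write-up supplies exactly the standard details behind that sentence --- the set inclusion $\ns{x:F_Y(x)<y}\subseteq\ns{x:F_X(x)<y}$, passage to suprema, and the observation that $U\in(0,1)$ almost surely --- so your approach is precisely the one the paper has in mind but leaves to the reader.
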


In Section \ref{couplings}, we will discuss an ingenious variation of the quantile coupling due to del Junco \cite[Section 4]{Juncoa}, which will be a key ingredient in our proof of Theorem \ref{mresult}.

\subsection{An simple application of Strassen's theorem}

Lemma \ref{easyS} will be used to prove the following simple observation,  which will serve as the starting point in our proof of  Theorem \ref{mresult}. For two binary sequences $x$ and $y$  of the same length, we write $x \preceq y$ if and only if $x_i \leq y_i$ for all indices $i$.  Thus the relation $\preceq$ defines a partial order on the set of binary sequences with the same length.  We write $x=1^{n}0^{\ell}$ to mean a binary sequence of $n$ ones followed by $\ell$ zeros.  

\begin{lemma}
\label{sd}
Let $n \geq 1$ and $p \in (\tfrac{1}{2}, 1)$.  Let $X=(X_1, \ldots, X_n)$ and $Y= (Y_1, \ldots, Y_n)$ be an i.i.d.\ sequences of Bernoulli random variables with parameters $p$ and $1-p$, respectively.    Let $B_n$ be the set of size $n+1$ of all binary sequences $z$ of length $n$ of the form $z=1^{n-\ell}0^{\ell}$ for some $\ell \in [0, n]$.  Let  $X^{*}$ and $Y^{*}$ be random variables that have laws  $X$ and $Y$ conditioned to be in $B_n$, respectively.  Then with respect to the order $\preceq$, defined on binary sequences,  $X^{*}$ stochastically dominates $Y^{*}$, and there is a monotone coupling of $X^{*}$ and $Y^{*}$.
\end{lemma}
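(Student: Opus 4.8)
The plan is to reduce the statement to the one-dimensional chain $B_n$ and then invoke the quantile coupling of Lemma \ref{easyS}.

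First I would note that $B_n$ is in fact \emph{totally} ordered by $\preceq$: one has $1^{n-\ell}0^{\ell} \preceq 1^{n-\ell'}0^{\ell'}$ if and only if $\ell \geq \ell'$, so $B_n$ may be identified with the chain $\{0,1,\dots,n\}$ by recording the number $\ell$ of trailing zeros, a larger $\ell$ corresponding to a $\preceq$-smaller sequence. Under this identification $X^{*}$ and $Y^{*}$ become random elements of $\{0,\dots,n\}$, and I would write down their laws explicitly. Set $a_{\ell} := p^{\,n-\ell}(1-p)^{\ell}$ for $0 \le \ell \le n$. Then $\P(X = 1^{n-\ell}0^{\ell}) = a_{\ell}$ and $\P(Y = 1^{n-\ell}0^{\ell}) = a_{n-\ell}$, and summing over $\ell$ and using the substitution $\ell \mapsto n-\ell$ shows that $\P(X \in B_n) = \P(Y \in B_n) = Z$ with $Z := \sum_{\ell=0}^{n} a_{\ell}$. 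Hence $\P(X^{*} = 1^{n-\ell}0^{\ell}) = a_{\ell}/Z$ while $\P(Y^{*} = 1^{n-\ell}0^{\ell}) = a_{n-\ell}/Z$.

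Next, since $p \in (\tfrac12,1)$ we have $a_{\ell}/a_{\ell+1} = p/(1-p) > 1$, so the sequence $(a_{\ell})_{\ell=0}^{n}$ is strictly decreasing. By the definition of stochastic domination, and because a sequence in $B_n$ is $\preceq$-below $1^{n-k}0^{k}$ precisely when it has at least $k$ zeros, the assertion that $X^{*}$ stochastically dominates $Y^{*}$ is equivalent to $\P(\ell_{X^{*}} \ge k) \le \P(\ell_{Y^{*}} \ge k)$ for every $k \in \{0,\dots,n\}$, that is, to $\sum_{\ell=k}^{n} a_{\ell} \le \sum_{\ell=0}^{n-k} a_{\ell}$. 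This follows from a term-by-term comparison: reindexing the left-hand sum as $\sum_{i=0}^{n-k} a_{k+i}$, each summand satisfies $a_{k+i} \le a_{i}$ since $(a_{\ell})$ is nonincreasing and $k+i \ge i$. (Equivalently, the likelihood ratio $\P(X^{*}=\ell)/\P(Y^{*}=\ell) = (p/(1-p))^{\,n-2\ell}$ is nonincreasing in $\ell$, the monotone likelihood ratio property, which is well known to imply the stochastic ordering; but the direct computation is short enough to include.)

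Finally, having shown that $X^{*}$ stochastically dominates $Y^{*}$ on the finite totally ordered set $B_n$, Lemma \ref{easyS} yields a quantile coupling $(X', Y')$ of $X^{*}$ and $Y^{*}$ with $X' \succeq Y'$ almost surely, which is the desired monotone coupling. I expect the only point requiring care to be keeping track of the order reversal between the count $\ell$ of zeros and the partial order $\preceq$, and hence the direction of the domination; the combinatorial inequality itself is immediate from monotonicity of $(a_{\ell})$.
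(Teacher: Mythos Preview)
Your proof is correct and follows essentially the same approach as the paper: identify $B_n$ with a chain, verify the cumulative-distribution inequality directly, and then invoke Lemma~\ref{easyS} for the monotone coupling. The only difference is in the arithmetic for the tail-sum inequality: the paper factors out $(1-p)^{\ell}$ versus $p^{\ell}$ and appeals to the symmetry identity $\sum_{i=0}^{n-\ell} p^{\,n-\ell-i}(1-p)^{i} = \sum_{i=0}^{n-\ell} (1-p)^{\,n-\ell-i}p^{i}$, whereas your term-by-term comparison $a_{k+i}\le a_i$ via monotonicity of $a_{\ell}=p^{\,n-\ell}(1-p)^{\ell}$ is a slightly more direct route to the same inequality.
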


Note that in  Lemma \ref{sd}, although the set of all binary sequences of a fixed length is only partially ordered by $\preceq$,  the set $B_n$ is totally ordered by $\preceq$.  The set $B_n$ can also be described as  the set of binary sequences of length $n$ that do not have a zero followed by a one.  We will refer to $B_n$ as a \dff{filler} set.

\begin{proof}[Proof of Lemma \ref{sd}]   
Lemma \ref{sd}	is simple consequence of the duality between $p$ and $1-p$. 
For every integer $\ell \in [0,n]$, we have
\begin{equation}
\label{dual}
\sum_{i=0}^{n- \ell} p^{n-\ell-i}(1-p)^i = \sum_{i=0}^{n- \ell} (1-p)^{n-\ell-i}p^i;
\end{equation}
 this implies, using $\ell=0$,  that  
$\P(X \in B_n) = \P(Y \in B_n)$.
 Thus in order to prove that $X^*$ stochastically dominates $Y^*$, it  suffices to show that for all $z \in B_n$, we have $\P(X \preceq z, X \in B_n) \leq \P(Y \preceq z, Y \in B_n).$
If $z =1^{n-\ell}0^{\ell}$, then since $p > 1-p$, by equality \eqref{dual} we have  
\begin{eqnarray*}
\P(X \preceq z, X \in B_n) &=&(1-p)^{\ell}\Big(\sum_{i=0}^{n- \ell} p^{n-\ell-i}(1-p)^i\Big) \\
&\leq& p^{\ell}\Big(\sum_{i=0}^{n- \ell} (1-p)^{n-\ell-i}p^i\Big) \\
&=&  \P(Y \preceq z, Y \in B_n).
\end{eqnarray*}

The existence of a monotone coupling follows from Lemma \ref{easyS}.
\end{proof}

\section{Markers, fillers, and joinings}

\subsection{Markers}

Let us fix $\X = \ns{0,1}^{\Z}$.  Let $x \in \X$.   We call the interval  $[i, i+1] \subset \Z$ a \dff{primary marker} if $x_i=0$ and $x_{i+1}=1$.   Later, we will define secondary and tertiary markers which will consist of consecutive primary markers.     Note that two distinct primary markers have an empty intersection.      We call an interval of $\Z$ a \dff{filler} if it is nonempty and lies between two primary markers.  Thus each $x \in \X$ partitions $\Z$ into intervals of primary markers and fillers.   

Let $p \in (\tfrac{1}{2},1)$, and consider the product probability measures on $\X=\ns{0,1}^{\Z}$ given by $\mu:=(1-p, p)^{\Z}$ and $\nu:= (p, 1-p)^{\Z}$.  Thus the probability that the zeroth coordinate is a one under $\mu$ is $p$ and is  $1-p$ under $\nu$. 
   By conditioning,  an instance of a random variable $X$ with law $\mu$  can be given by first deciding on the locations of the primary markers, and then deciding on the content of the filler; the same observation holds for a random variable $Y$ with law $\nu$.

   To be more precise, let $\T=\ns{\mathsf{M},\mathsf{F}}^{\Z}$, where $\mathsf{M}$ and $\mathsf{F}$ are two symbols that stand for `marker' and `filler.'  For each $x \in \X$, define  the \dff{hat map} by setting
  \begin{eqnarray*}
  	\hx(i) = \begin{cases} \mathsf{M} \text{ if }  i\in \Z \text{ is in a primary marker;} &  \\
  		\mathsf{F} \text{ otherwise}. 
  	\end{cases}
  \end{eqnarray*}
Let $\tau$ and $\tau'$ be push-forwards of the  measures $\mu$ and $\nu$ via the hat map.  Sometimes we will refer to $\tau$ as the \dff{marker measure}.       We have the  following disintegration.  For $\tau$-almost every $t\in \T$, there exists  a probability measure,  $\mu_{t}$ on  $\X$, such that  
  $$ \int f(x) d\mu(x) = \int \Bigg(\int  f(x) d\mu_{t}(x) \Bigg)d\tau(t)$$
for all measurable $f:\X \to [0, \infty)$.  

\begin{remark}
	\label{iid}
Keane and Smorodinsky \cite[Lemma 4]{keanea} give a concrete description of $\mu_t$.    The measure $\mu_{t}$ assigns the sequence $01$ to each primary marker interval of $t$, and  is a product measure on the filler intervals, where on a filler interval of length $n$ it is the law of $n$ i.i.d.\ Bernoulli random variables with parameter $p$ conditioned to be in the set $B_n$ of sequences of consecutive ones followed by consecutive zeros (see Lemma \ref{sd}).  The analogous result holds of $\nu$.  \erk
\end{remark}

\begin{remark}
	\label{samemark}
  Notice that the probability that the origin is contained in a primary marker is same under $\mu$ and $\nu$.   Keane and Smorodinsky \cite[Lemma 3]{keanea} proved that $\tau = \tau'$.   Thus the marker measure $\tau$ is the same for  $\mu$ and $\nu$ and depends only on the parameter $p$.   This  fact will also be important in our proof of Theorem \ref{mresult}. \erk
\end{remark}  
  
  \subsection{Joinings}
   A \dff{coupling} of $\mu$ and $\nu$ is a probability measure $\xi$  on $\X \times \X$ that has marginals $\mu$ and $\nu$; a \dff{joining} is a coupling  that is invariant under the product shift $T \times T$, so that $\xi \circ (T \times T) = \xi$.  A joining $\xi$ is \dff{ergodic} if all $\xi$-almost sure $(T \times T)$-invariant sets have measure zero or one.      A coupling $\xi$ is \dff{monotone} if 
   $$\xi\ns{(x, y)\in \X \times \X:  x_i \geq y_i \text{ for all } i \in \Z }=1.$$

       A joining $\xi$ is of \dff{marker form} if for $\xi$-almost every $(x,y)\in \X \times \X$ the binary sequences $x$ and $y$ have the same primary markers.  It   follows from Remark \ref{samemark} that there exists a joining of $\mu$ and $\nu$ in marker form.  We will use a monotone version of this fact.
 
 \begin{proposition}
\label{KSmark}
 There exists a  monotone joining of $\mu$ and $\nu$ of marker form.
 \end{proposition}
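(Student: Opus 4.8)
The plan is to build the monotone joining of marker form by working over the common marker measure $\tau$ (using Remark \ref{samemark}) and then coupling the filler contents fiber by fiber in a monotone way, relying on Lemma \ref{sd}. Concretely, let $(S, \tau)$ be the space $\T$ equipped with the marker measure, which by Remark \ref{samemark} is simultaneously the hat-pushforward of both $\mu$ and $\nu$. By the disintegrations described in Remark \ref{iid}, we have $\mu = \int \mu_t \, d\tau(t)$ and $\nu = \int \nu_t \, d\tau(t)$, and for $\tau$-a.e.\ $t$ the measures $\mu_t$ and $\nu_t$ both put the word $01$ on every primary marker of $t$ and are products over the filler intervals of $t$; on a filler interval of length $n$, $\mu_t$ is the law of $n$ i.i.d.\ Bernoulli$(p)$ variables conditioned to lie in $B_n$, and $\nu_t$ is the same with parameter $1-p$. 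First I would couple, for each such $t$ and each filler interval $I$ of $t$ of length $n = |I|$, the two conditioned laws on $B_n$ using the monotone coupling $\rho_n$ furnished by Lemma \ref{sd}; taking the product of these couplings over all filler intervals, and the trivial diagonal coupling $01 \mapsto 01$ on every primary marker, yields a coupling $\xi_t$ of $\mu_t$ and $\nu_t$ supported on pairs $(x,y)$ with $x \succeq y$ coordinatewise and with $\hat{x} = \hat{y} = t$.

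Next I would set $\xi := \int \xi_t \, d\tau(t)$. This is a coupling of $\mu$ and $\nu$ by construction of the disintegration, it is monotone because each $\xi_t$ is supported on $\{x \succeq y\}$, and it is of marker form because each $\xi_t$ is supported on pairs with the common hat image $t$, so $x$ and $y$ share all primary markers $\xi$-a.s. The one remaining point is shift-invariance: I need $\xi$ to be a \emph{joining}, i.e.\ $(T\times T)$-invariant. This follows provided the fiberwise couplings $\xi_t$ are chosen equivariantly, i.e.\ $\xi_{\hat{T} t} = (T \times T)_* \xi_t$ where $\hat T$ is the shift on $\T$ — equivalently, the coupling recipe depends only on the combinatorial pattern of markers and fillers of $t$ and not on the absolute indexing. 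Since the recipe I described (diagonal on markers, product of the fixed couplings $\rho_n$ over fillers, with $\rho_n$ depending only on the length $n$) is manifestly translation-equivariant, and $\tau$ is $\hat T$-invariant, $\xi$ is $(T\times T)$-invariant. Hence $\xi$ is the desired monotone joining of marker form.

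The main obstacle — or rather the main point requiring care — is the measurability and equivariance of the fiberwise construction: one must check that the assignment $t \mapsto \xi_t$ is a measurable family (so that the integral $\int \xi_t\, d\tau(t)$ makes sense) and genuinely commutes with the shift, which requires being somewhat careful about how $\mu_t$ and $\nu_t$ are realized as products over the (random, but $t$-measurable) collection of filler intervals. This is essentially the same bookkeeping carried out by Keane and Smorodinsky in \cite{keanea} and reused in \cite{Qsc}; once Lemma \ref{sd} supplies the monotone coupling on each fixed filler set $B_n$, no further analytic difficulty arises. An alternative, cleaner packaging would be to observe that the product of the monotone couplings $\rho_n$ over the filler blocks, together with the diagonal on markers, directly defines a monotone coupling of the conditional kernels, and invoke the standard fact that integrating a measurable, shift-equivariant family of couplings against a shift-invariant base measure produces a joining.
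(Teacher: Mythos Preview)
Your proposal is correct and follows essentially the same route as the paper: disintegrate over the common marker measure $\tau$ (Remark~\ref{samemark}), couple the filler contents interval by interval using the monotone coupling from Lemma~\ref{sd} while leaving the primary markers fixed (Remark~\ref{iid}), and take the product. You are in fact more explicit than the paper about why the resulting coupling is shift-invariant (hence a joining) and about the measurability of $t\mapsto\xi_t$; the paper's proof simply asserts the coupling is of marker form and monotone without spelling out the equivariance, so your added care is warranted rather than superfluous.
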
  

\begin{proof}
By Remark \ref{samemark}, we have  $\tau = \tau'$.  Hence we may assume that there exist
 random variables $X$ and $Y$ with laws $\mu$ and $\nu$ such that $X$ and $Y$ have the same primary markers and filler intervals.   Consider a coupling of $X$ and $Y$ defined in the following way.  By Remark \ref{iid}, conditioned on the locations of the primary markers, for each filler interval $I$ of $X$, we know that the law of  restrictions of $X$ to $I$ is given by the law of  a finite sequence of i.i.d.\ Bernoulli random variables with parameter $p$ conditioned be in a filler set; furthermore, conditioned on the locations of primary markers,  the restrictions of $X$ to each filler interval give  independent random variables.  The analogous statement holds for $Y$.   For each filler interval $I$, by Lemma \ref{sd}, there exists a monotone coupling of the restriction of $X$ to $I$ and the restriction of $Y$ to $I$.  Hence by applying Lemma \ref{sd} to each of the filler intervals independently, and leaving the primary  markers alone,  we obtain a coupling $(X', Y')$ of $X$ and $Y$ whose law is monotone and of marker form.    
\end{proof}

\subsection{The Baire category approach of Burton and Rothstein}
\label{baire}

Let $p \in (\tfrac{1}{2}, 1)$ and  $J=J(p)$ be the set of all monotone ergodic joinings of $\mu=(1-p, p)^{\Z}$ and $\nu=(p, 1-p)^{\Z}$ of marker form.        Note that $J$ is nonempty by Proposition \ref{KSmark}.  Following the approach of   Burton and Rothstein \cite{BurRot}, we will show the  
monotone isomorphisms 
are a  residual set in $J$, when we endow $J$ with a suitable topology.    Following del Junco  \cite{Junco}, we assign a complete metric to $J$ as follows.     
 For $i\geq 0$, let $\mathcal C_i$ be the set of  measurable
$C \subset  \X \times \X$ that only depend on the coordinates
$j\in [-i,i]$; we will call such sets \dff{cylinder sets}.  
For any two measures $\zeta$ and $\xi$ on $\X \times \X$ (which may not be joinings), set
\[
\metricstar (\zeta, \xi):=  \sum_{i=0}^ \infty 2^{-(i+1)}
\sup_{C \in \mathcal C_i} |\zeta(C) - \xi(C)|.
\]
Thus $\metricstar$ is the usual weak-star metric.  
For $\xi \in J$, let $\xi_{t}$ be $\xi$ conditioned to have the primary  markers given by  $t \in \T$.    Let us remark that $\xi_t$ is no longer a joining.   Let $\tau$ be the common marker measure.   For $\zeta, \xi \in J$,     set
\begin{equation}
\label{metric}
 \metric(\xi, \zeta):= \int \metricstar(\xi_{t}, \zeta_{t}) d\tau(t).
\end{equation}

Standard methods show that $(J, \metric)$ is a Baire space   (see for example \cite[Lemma 17]{Qsc}).  We will show that the set of monotone isomorphisms contains an intersection of open dense sets of $J$, and hence is nonempty by the Baire category theorem.  
To be more precise, 
let $\mathcal F$ denote the product sigma-algebra for  $\X$.
Let
$\mathcal P := \ns{P_0, P_1}$ 
denote the partition of  $\X$ according the zeroth coordinate
so that $P_i := \ns{x \in \X: x_0 = i}$. Let  $\zeta \in J$ 
and let $\e > 0$.    If there exists $\T' \subset \T$ with $\tau(\T') >1-\e$ such that for every $t \in \T'$,  and each  $P \in \sigma(\mathcal P)$ there exists a
$P' \in \mathcal F$ such that
\[
\zeta_t\Big( (P' \times  \X)  \ \triangle
\  (\X \times P) \Big) < \e,
\]
then we say that $\zeta$ is an
\dff{$\e$-almost factor} from $B(1-p, p)$  to $B(p, 1-p)$. For each $\e>0$, let $U_{\e}$ be the set of all $\e$-almost factors from $B(1-p,p)$ to $B(p, 1-p)$.    It is routine to verify that $U_{\e}$ is an open set (see for example \cite[page 123--24]{down}) and that an element in the intersection of all the $U_{\e}$ defines a  monotone factor from $B(1-p,p)$ to $B(p,1-p )$ (see for example \cite[Theorem 2.8]{rue}).    The real  work lies in verifying that $U_{\e}$ is dense; once this has been proved, the Baire category theorem gives that the set of monotone factors from $B(1-p, p)$ to $B(p, 1-p)$ contains an intersection of open dense sets, and hence is nonempty.

Theorem \ref{mresult} asserts the existence of a monotone isomorphism which appears to be a much stronger statement the existence of a monotone factor.    However, one of the advantages of the Baire category approach is that proving the existence of the isomorphism requires little  additional work.  We define an approximate factor from $B(p, 1-p)$ to $B(1-p, p)$ in the analogous way.    Let  $\zeta \in J$ 
and let $\e > 0$.    If there exists $\T' \subset \T$ with $\tau(\T') >1-\e$ such that for every $t \in \T'$,  and each  $P \in \sigma(\mathcal P)$ there exists a
$P' \in \mathcal F$ such that
$\zeta_t\big( (P \times  \X)  \ \triangle
\  (\X \times P') \big) < \e,$
then we say that $\zeta$ is an
\dff{$\e$-almost factor} from $B(p, 1-p)$ to $B(1-p, p)$.  For each $\e>0$, let $V_{\e}$ be the set of all $\e$-almost factors.   Again, one can  verify that $V_{\e}$ is an open set, and that an element in the intersection of all the $V_{\e}$  defines a monotone  factor from $B(p, 1-p)$ to $B(1-p, p)$.   Moreover,  any element in the grand intersection of all the $U_{\e}$ and $V_{\e}$ defines a monotone isomorphism.    It will become apparent that the same proof that shows that $U_{\e}$ is dense can be essentially  copied to show that $V_{\e}$ is dense.  Thus the Baire category theorem shows that the grand intersection is nonempty.

It remains to verify that for each $\e>0$, the set $U_{\e}$ of    $\e$-almost factors is dense.  Given  $\e >0$ and $\xi \in J$, we need to find  $\xi' \in U_{\e}$ with $\metric(\xi, \xi')<\e$.  We will define $\xi'$ as a certain perturbation of $\xi$ which will be obtained using del Junco's star-coupling \cite[Section 4 and Proposition 4.7]{Juncoa}.

\section{The star-coupling} 
\label{couplings}

Let $X$ and $Y$ be random variables taking values on finite sets $A$ and $B$, respectively.  In this section, we will discuss various couplings of $X$ and $Y$; that is, random variables $X'$ and $Y'$ defined together on the same probability space with the same distribution as $X$ and $Y$, respectively.

Let $\rho$ be a joint probability mass function for $X$ and $Y$.   
We say that an element $a \in A$ is \dff{split} by $\rho$
if  there exist distinct $b,b' \in B$, such that $\rho(a,b)>0$ and  $\rho(a,b')>0$.  For the purposes of defining factors, we are interested in couplings that do not split many elements.  

\begin{remark}
	\label{usefula}
   Let us remark that if we assign an arbitrary total ordering to $A$ and $B$, then the law of a quantile coupling of $X$ and $Y$ will split at most $|B|-1$ elements of $A$. \erk
\end{remark}   
  
\begin{remark}
\label{KSdel}
   Keane and Smorodinsky \cite[Theorem 11]{keanea} proved that there is a coupling of $X$ and $Y$ with law $\rho'$ that will split at most $|B|-1$ elements of $A$ and in addition, $\rho'$ is {absolutely continuous} with respect to $\rho$; that is, $\rho(a,b) = 0$ implies $\rho'(a,b)=0$.  A version of their theorem  was used in the proof of Theorem \ref{old}, but we will not need to appeal to this result in our proof of Theorem \ref{mresult}. \erk
\end{remark}

Let $X$ and $Y$ be  jointly distributed random variables taking values on totally ordered finite sets $(A, \leq)$ and $(B , \leq)$, respectively.   Let $X'$ have the same law as $X$. One way to generate another random variable $Y'$ so that $(X', Y')$ has the same joint distribution as $(X,Y)$ is to  appeal to a quantile representation.  Consider the set of conditional cumulative distribution functions given by $Q_a:=\P(Y \leq b \ | \ X=a)$ for each $a \in A$.  Let  $U$ be uniformly distributed in $[0,1]$ and independent of $X'$.  Set $Y' :=Q_{X'}^{-1}(U)$.  It is easy to verify that $(X', Y')$ has the same joint distribution as $(X, Y)$; we call $(X',Y')$ the \dff{conditional quantile representation} of $(X,Y)$.

The next coupling we discuss is due to del Junco \cite{Juncoa, Junco}.      
 Let $(X_1, Y_1)$ and $(X_2, Y_2)$ be random variables taking values on the finite sets $(A_1,B_1)$ and $(A_2, B_2)$, respectively. 
 Suppose that each of the  sets $A_1, A_2, B_1,$ and $B_2$ are totally ordered sets.   
We will define $(X_1', Y_1')$ and $(X_2', Y_2')$ such that  $(X_i', Y_i')$ has the same law as $(X_i, Y_i)$ for $i=1,2$.       Let $U_1, U_2$, and $U$ be independent random variables uniformly distributed in the unit interval $[0,1]$.  Let $X_2'$ and $Y_1'$ be independent random variables that have the same laws as $X_2$ and $Y_1$, respectively; more specifically, we may assume that they are given by their respective quantile representations with sources of randomization given by $U_2$ and $U_1$.  Next, using the {\em same} source of randomization $U$,  let $Y_2'$ be such that $(X_2', Y_2')$ is the conditional quantile representation  of $(X_2, Y_2)$,  and  let $X_1'$ be such that $(Y_1', X_1')$ is the conditional quantile representation of  $(Y_1, X_1)$.    We refer to  $((X_1', Y_1'), (X'_2, Y'_2))$ as the  \dff{star-coupling} of $(X_1, Y_1)$ and $(X_2, Y_2)$.   

\begin{remark}   It is immediate from the definition the star-coupling that $X_2'$ is independent of $(Y_1', X_1')$ and $Y_1'$ is independent of $(X_2', Y_2')$.   \erk

\end{remark}

\begin{remark}
	\label{usefulb}
	It follows from Remark \ref{usefula}, that the star-coupling of the
	random variables $(X_1, Y_1)$ and $(X_2, Y_2)$ taking values on
	$(A_1, B_1)$ and $(A_2, B_2)$, respectively,  has the property that
	for a fixed $a_2 \in A_2$ and $b_1 \in B_1$,  the number of  $a_1 \in A_1$
	such that there are distinct $b_2, b_2' \in B_2$ with both $(a_1,b_1, a_2,b_2)$
	and  $(a_1,b_1, a_2,b_2')$ receiving positive mass under the law of  star-coupling
	$(X_1', Y_1', X_2', Y_2')$ is at most $|B_2| -1$. \erk
\end{remark}

\begin{remark}
del Junco  refers to his coupling as the $*$-joining \cite{Juncoa, Junco}.   \erk
\end{remark}

 We may also iterate the star-coupling to more than two pairs of random variables.  For example, if $(X_i, Y_i)$ are finite-valued random variables taking values in totally ordered spaces $(A_i, B_i)$ for $i=1,2,3$, we define its  \dff{iterated star-coupling} in the following way.  
Take the star-coupling of $(X_1, Y_1)$ and $(X_2, Y_2)$ to be given by  $((X_1', Y_1'), (X_2', Y_2'))$.  Assign a lexicographic ordering to the set $A_1 \times A_2$,  and take the star-coupling of $((X_1', X_2'), (Y_1', Y_2'))$ and $(X_3, Y_3)$, to obtain random variables $((X_1^{\prime \prime}, X_2^{\prime\prime}, X_3^{\prime \prime}), (Y_1^{\prime \prime}, Y_2^{\prime \prime}, Y_3^{\prime \prime}))$.   
Notice that by definition of the star-coupling,   $(X_i^{\prime \prime}, Y_i^{\prime \prime})$  has the same law as $(X_i, Y_i)$ for $i=1,2,3$.  

We will make use of the following variation of the  iterated star-coupling.   Let $\rho$ be a probability measure on the finite set $A \times B$ which has projections $\alpha$ and $\beta$ on the sets $A$ and $B$, respectively.     
For every $k \in \Z^{+}$, let $\alpha^k$  and $\beta^k$ denote the $k$-fold product measures on  $A^k$ and $B^k$, respectively.  
 Let $\kinitial$ and $\kblock \geq 2$ be integers.  Let $Z_i=(X_i, Y_i)$ be random variables with the following \dff{grouping property} with law $\rho$ and constants $\kinitial$ and $\kblock$.       
For each $i \geq 1$, the random variable $Z_i$ takes values on $(A \times B)^{\kblock} \equiv A^{\kblock} \times B^{\kblock}$ and has a law that has projections  $\alpha^{\kblock}$ and $\beta^{\kblock}$ on $A^{\kblock}$ and $B^{\kblock}$, respectively, and a projection $\rho$  on each copy of $A \times B$.         
Similarly, for $i=0$, the random variable $Z_0$ takes values on $(A \times B)^{\kinitial}$ and has a law that has projections $\alpha^{\kinitial}$ and $\beta^{\kinitial}$ on $A^{\kinitial}$ and $B^{\kinitial}$, respectively, and a projection $\rho$  on each copy of $A \times B$.   

 For $i \geq 1$,   write $X_i = (X_i^1, \ldots, X_i^{\kblock})$, $Y_i = (Y_i^1, \ldots, Y_i^{\kblock})$, $Z_i^j = (X_i^j, Y_i^j)$,    $Y_i^{\miss} = (Y_i^1, \ldots, Y_i^{\kblock - 1})$, and  $Z_i^{\miss} = (X_i, Y_i^{\miss})$.          

First, consider the following coupling of $X_0, X_1, Y_0$, and  $Y_1$.   The resulting coupling will {\em not} be a coupling of $Z_0=(X_0,Y_0)$ and $Z_1=(X_1, Y_1)$, but the resulting coupling as a measure on $(A \times B)^{\kinitial + \kblock}$  will have $\rho$ as a projection on each copy of $A \times B$.    
 Let $W =(E, F)$ have law given by the product measure  $\rho^{\kblock}$.  
Let $(\underline{Z_0}, \underline{Z_1}^{\miss})$ be a star-coupling of $Z_0=(X_0, Y_0)$ and $W^{\miss} = (E, F^{\miss})$.    
Using independent randomization, let $Y^{\rep}_1$ 
be such that the pair $(\underline{X_1}^{\kblock}, Y_1^{\rep})$ is  the conditional quantile representation of    
$W^{\kblock}=(E^{\kblock}, F^{\kblock})$. It is easy to verify from the properties of the star-coupling and the independence of $(W^1, \ldots, W^{\kblock})$  that  $Y_1^{\rep}$ is independent of  $\underline{Y_1}^{\miss}$; moreover,  $\big(\underline{Z_0}, (\underline{X_1}, (\underline{Y_1}^{\miss}, Y_1^{\rep})     \big)$ is a coupling of $Z_0$ and $W$ such that $(\underline{X_0}, \underline{X_1})$ has law $\alpha^{\kinitial + \kblock}$ and  $(\underline{Y_0}, \underline{Y_1}^{\miss}, Y_1^{\rep})$ has law $\beta^{\kinitial + \kblock}$. We will refer to this coupling as the \dff{star-coupling with replacement} of $Z_0$ and $Z_1$.    

Here, two `replacements' take place, $Z_1$ was replaced by $W=(E, F)$ which has the product measure $\rho^{\kblock}$ as its law, and we only applied the star-coupling to  $Z_0$ and $W^{\miss}$, where in the final construction, the  `missing' value is replaced with a  conditional quantile representation.

We iterate this construction as follows.  First, let $(Z_0', Z_1')$ be the  star-coupling with replacement of $Z_0$ and $Z_1$.      Next, we take the star-coupling with replacement of $\big( (X_0', X_1'), (Y_0', Y_1') \big)$ and $Z_2$; to obtain random variables $\big( (\underline{X_0'}, \underline{X_1'}, \underline{X_2}), (\underline{Y_0'}, \underline{Y_1'}, \underline{Y_2}^{\miss}, Y_2^{\rep})\big)$ taking values on 
the space 
$A^{\kinitial +2\kblock} \times B^{\kinitial +2 \kblock}$ with a law that has projections $\alpha^{\kinitial+2\kblock}$ and $\beta^{\kinitial +2\kblock}$, respectively.   Finally, it is clear that this construction can be extended an arbitrary number of times in the obvious way.    We call this construction the \dff{iterated-star coupling with replacement} of $Z_0, Z_1, \ldots Z_n$.

The importance of the star-coupling can be summarized in Proposition \ref{delJunco}, below; it is a version of del Junco's \cite[Proposition 4.7]{Junco}.     

\newpage

\begin{proposition}[del Junco]
	\label{delJunco}
	
Let $\rho$ be a probability measure on the finite set $A \times B$ and have marginals $\alpha$ and $\beta$, on $A$ and $B$, respectively.     Assume that $H(\alpha) = H(\beta)$.     Let $\kblock \geq 2$.
    For $\eta >0$, there exists   $\kinitial = \kinitial(\eta, \kblock) \in \Z^{+}$
	such that the following holds.    
	
 Let $n \in \Z^{+}$.   Let $Z_i=(X_i, Y_i)$, for $i=0,1, \ldots, n$, have the grouping property with the law $\rho$ and constants $\kinitial$ and $\kblock$.    
	Define the following product spaces
	$$\mathbf{I}_j := A^ {\kinitial} \times A^{\kblock j} \equiv A^{\kinitial + \kblock j},$$
		$$\mathbf{J}_j := B^ {\kinitial} \times B^{\kblock j} \equiv B^{\kinitial + \kblock j},$$ 
and
		$$\mathbf{\bar{J}}_j := B^{(\kblock-1)j}.$$
 For $\mathbf{y} = (y_0,  (y_1, y_1'), \ldots,  (y_j, y_j')) \in   {\mathbf{J}_j} = B^{\kinitial} \times B^{\kblock j} = B^{\kinitial} \times (B^{\kblock -1} \times B) \times  \cdots \times (B^{\kblock -1} \times B)$, let  $$\mathbf{\bar{y}} = (y_1, \ldots, y_j) \in \mathbf{\bar{J}}_j.$$ 
	Let $\mathbf{W}_n= (\mathbf{X}_n, \mathbf{Y}_n)$ be a
	random variable given by the iterative
	star-coupling with replacement of ${Z}_0, Z_1, \ldots, Z_n$.
	There exists a deterministic function
	$\Psi:\mathbf{I}_n \to \mathbf{\bar{J}}_n$ such that  $\P\big(\mathbf{\bar{Y}}_n =
	\Psi(\mathbf{X}_n)\big) > 1 - \eta$.
\end{proposition}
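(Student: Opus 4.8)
The plan is to show that the star-coupling with replacement, when iterated, makes the $Y$-coordinates (apart from the discarded "rep" positions) almost a deterministic function of the $X$-coordinates, by tracking how entropy is conserved and concentrated. The underlying mechanism is del Junco's: at each stage the star-coupling with replacement takes the running $X$-string $(\underline{X_0'},\ldots,\underline{X_{i-1}'})$ together with a fresh product block $W=(E,F)\sim\rho^{\kblock}$, forms the star-coupling of the running pair with $W^{\miss}=(E,F^{\miss})$, and then fills in the last coordinate of the $Y$-block with an independent conditional quantile representation. The key structural facts, all immediate from the construction, are: (i) the running $X$-marginal is always the appropriate product $\alpha^{\kinitial+\kblock i}$ and the running $Y$-marginal is $\beta^{\kinitial+\kblock i}$; (ii) by the independence built into the star-coupling, $\underline{Y_i}^{\miss}$ is conditionally independent of the fresh $X$-block given the past, so no entropy "leaks forward" through the missing coordinates; and (iii) because $W^{\kblock}$ is an independent product, the replaced coordinate carries entropy exactly $H(\beta)$, matched against the entropy $H(\alpha)$ per symbol coming in on the $X$-side.

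First I would set up the entropy bookkeeping. Let $\mathbf{X}_i,\mathbf{Y}_i$ denote the running coupled variables after stage $i$, and write $\mathbf{\bar Y}_i$ for the $\mathbf{\bar J}_i$-projection (dropping the $\kblock$-th coordinate of each group). Using $H(\alpha)=H(\beta)$ and the product marginals, one computes $H(\mathbf{X}_i) = (\kinitial+\kblock i)H(\alpha)$ and $H(\mathbf{\bar Y}_i) = (\kinitial + (\kblock-1)i)H(\beta)$, with the difference $H(\mathbf{X}_i) - H(\mathbf{\bar Y}_i) = (\kinitial + i)H(\alpha)$ accounted for entirely by the discarded $\rep$-coordinates plus the initial block $Z_0$. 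The heart of the argument is then to bound the conditional entropy $H(\mathbf{\bar Y}_n \mid \mathbf{X}_n)$: the star-coupling is designed so that, conditioned on the entire $X$-string, the only residual randomness in $\mathbf{\bar Y}_n$ comes from the very first star-coupling step (involving $Z_0$ and the first block's $F^{\miss}$), because each subsequent $Y^{\miss}$-block is, given the $X$-string, a near-deterministic function via the conditional quantile map applied at the previous stage. Quantitatively, $H(\mathbf{\bar Y}_n \mid \mathbf{X}_n)$ is bounded by a constant $c(\kblock)$ times $\kinitial$ — it does not grow with $n$. Choosing $\kinitial$ large relative to $\eta$ and $\kblock$ is exactly what del Junco's earlier proposition (the per-block version, stated before as the motivating result) delivers; here one iterates it $n$ times with the entropy defect absorbed once, up front, into the $\kinitial$-sized initial segment.

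Next I would convert the conditional-entropy bound into the probability statement. Since $\mathbf{\bar Y}_n$ takes values in the finite set $\mathbf{\bar J}_n = B^{(\kblock-1)n}$, Fano's inequality (or a direct argument: a discrete variable with small conditional entropy is, with high probability, equal to its conditional mode) gives that there is a function $\Psi:\mathbf{I}_n\to\mathbf{\bar J}_n$, namely $\Psi(\mathbf{x}) = \argmax_{\mathbf{\bar y}} \P(\mathbf{\bar Y}_n = \mathbf{\bar y} \mid \mathbf{X}_n = \mathbf{x})$, with $\P(\mathbf{\bar Y}_n = \Psi(\mathbf{X}_n)) > 1 - \eta$, provided $H(\mathbf{\bar Y}_n\mid\mathbf{X}_n)$ was made small enough — and since that conditional entropy is at most $c(\kblock)\kinitial$ while the trivial bound on the "number of bad symbols" is controlled by $\log|B|$ per symbol, the right quantity to make small is actually the conditional entropy *rate* contribution from $Z_0$; one picks $\kinitial = \kinitial(\eta,\kblock)$ so that Fano forces the error below $\eta$ uniformly in $n$. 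The main obstacle is precisely this uniformity in $n$: one must verify that the star-coupling-with-replacement genuinely prevents the conditional entropy of the $Y$-coordinates given the $X$-coordinates from accumulating across the $n$ stages — that the "replacement" of each $Z_i$ by a fresh independent product block $W$, together with discarding the $\rep$-coordinate, severs the dependence chain at each step, so that all the residual uncertainty is localized in the single initial block. This is where del Junco's insight is essential, and where I would invoke his Proposition most carefully, checking that the independence relations stated in the construction (e.g. $Y_i^{\rep}$ independent of $\underline{Y_i}^{\miss}$, and $\underline{X}$-blocks mutually consistent with $\alpha$-products) are exactly what is needed to run the induction on $n$ with $\kinitial$ fixed.
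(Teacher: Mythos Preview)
Your approach via conditional entropy and Fano's inequality has a genuine gap. You claim $H(\mathbf{\bar Y}_n \mid \mathbf{X}_n) \leq c(\kblock)\,\kinitial$ and then propose to choose $\kinitial$ large so that ``Fano forces the error below $\eta$.'' But this runs in the wrong direction on two counts. First, Fano's inequality gives a \emph{lower} bound on the decoding error in terms of conditional entropy; to bound the error of the MAP decoder $\Psi$ from above you need the conditional entropy itself to be \emph{small} (so that the conditional mode carries mass close to $1$), not merely bounded. Second, even granting your inequality, enlarging $\kinitial$ increases $c(\kblock)\,\kinitial$, so the bound on the conditional entropy gets worse, not better. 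Nothing in your sketch drives $H(\mathbf{\bar Y}_n \mid \mathbf{X}_n)$ toward zero as $\kinitial$ grows; the initial block $Y_0$ alone carries entropy $\kinitial H(\beta)$ and is not determined by anything on the $X$-side, so there is no reason to expect the residual uncertainty in $\mathbf{\bar Y}_n$ to be small in absolute terms.

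The paper's argument is combinatorial rather than entropic. It fixes $\e>0$ small relative to $\kblock$ and uses the Shannon--McMillan--Breiman theorem to define $\alpha$-good strings $\mathbf{x}$ (with $\alpha^{\mathbf{L}_j}(\mathbf{x}) < e^{-(h-\e)\mathbf{L}_j}$) and $\beta$-good strings $\mathbf{y}$ (with $\beta^{\mathbf{\bar L}_j}(\mathbf{\bar y}) > e^{-(h-2\e)\mathbf{L}_j}$). An element $\mathbf{x}\in\mathbf{I}_j$ is called \emph{desirable} if it is completely good at every prefix, is not ``finely split'' (split with respect to the $\mathbf{\bar y}$-equivalence) by the law of $\mathbf{W}_j$, and has a unique completely good $\mathbf{y}$-partner up to equivalence; $\Psi$ then sends each desirable $\mathbf{x}$ to that partner's $\mathbf{\bar y}$. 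The core is an induction on $j$ proving
\[
\P(\mathbf{X}_j \text{ not desirable}) \;\leq\; \P(\mathbf{X}_j \text{ not c.g.}) + \P(\mathbf{Y}_j \text{ not c.g.}) + |B|^{\kblock}\sum_{i=0}^{j-1} e^{-\e \mathbf{L}_i}.
\]
The last sum comes from the splitting bound of Remark~\ref{usefulb}: for fixed new-block value $x$ and fixed past $\mathbf{\bar y}$, at most $|B|^{\kblock-1}-1$ past values $\mathbf{x}$ can be finely split, and on good strings each such $\mathbf{x}$ has conditional probability at most $e^{-\e\mathbf{L}_{j-1}}$ (divide the $\alpha$-good upper bound by the $\beta$-good lower bound). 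Finally $\kinitial$ is chosen large so that SMB makes the first two terms below $\eta/3$ and the geometric tail $|B|^{\kblock}\sum_{i\geq \kinitial} e^{-\e i}$ is below $\eta/3$. Thus large $\kinitial$ pushes every $\mathbf{L}_j\geq\kinitial$ into the SMB regime and shrinks the split probabilities---the opposite of your sketch, where $\kinitial$ appeared as a multiplicative penalty.
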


The proof of Proposition \ref{delJunco} uses the Shannon-McMillan-Breiman theorem  and Remark \ref{usefulb}.   A version of Proposition \ref{delJunco} is also used the proof of Theorem \ref{old} of  Quas and Soo, see \cite[Proposition 14]{Qsc}.

\section{The Proof of Proposition \ref{delJunco}}

\begin{proof}[Proof of Proposition \ref{delJunco}]
  We will place conditions on $\kinitial$ later.       Let $h:=H(\alpha) = H(\beta)$.  Let $\e>0$ such that
  \begin{equation}
  \label{choiceofe}
  h - 2\e > \big(1- \tfrac{1}{\kblock}\big)(h+ \e).
  \end{equation}   
   Set
	\[\mathbf{L}_j:= \kinitial + \kblock j,  \text{ for } 0 \leq j \leq n. \]

	%
	%

	Let $\mathbf{x} \in {\mathbf{I}_j}$ be given by
	$\mathbf{x}= (x_{0}, \ldots, x_{j})$.  
%
%
    We say that $\mathbf{x}$ is
	$\alpha$-\dff{good} if
	\begin{equation}
	\label{pgood}
	\alpha^{\mathbf{L}_j}(\mathbf{x}) <
	e^{-(h - \e) \mathbf{L_j}},
	\end{equation}
	and is
	$\alpha$-\dff{completely good} if for all $ 0 \leq i \leq j$, we have
	$(x_{0}, \ldots, x_{i}) \in \mathbf{I}_i$ is good.   

The corresponding definition for $\beta$ is more complicated.  We remark  that in the presence of a strict entropy gap, $H(\alpha) > H(\beta)$,  the definition could be more simple and symmetric (see for example, \cite[page 366]{keanea} or    \cite[Proof of Proposition 14]{Qsc}.    We declare that  every  $\mathbf{y} \in \mathbf{J}_0$ is $\beta$-\dff{good}. 
	Set
\[\mathbf{\bar{L}}_j:=    (\kblock-1)j \text{ for } 0 \leq j \leq n,\]
so that $\mathbf{L}_j = \mathbf{\bar{L}}_j + \kinitial + j$.
       We say that $\mathbf{y}=(y_0,  (y_1, y_1'), \ldots,  (y_j, y_j')) \in \mathbf{J}_j$ is   $\beta$-\dff{good} if
	\begin{equation}
	\label{qgood}
	\beta^{\mathbf{{\bar{L}}}_j}(\mathbf{\bar{y}}) >
	e^{-(h -2\e) \mathbf{{L}_j}}.
	\end{equation}
    Note that being $\beta$-good  does not depend on the behavior of
    the coordinates 
     $(y_0, y_1', \ldots, y_j')$ and $\mathbf{L}_j$ appears in the exponent rather than $\mathbf{\bar{L}}_j$ on the right hand side of \eqref{qgood}.
We say that $\mathbf{y}$ is
	$\beta$-\dff{completely good} if  for all $ 0 \leq i \leq j $, we have
	$\mathbf{y}_j=(y_0,  (y_1, y_1'), \ldots,  (y_i, y_i')) \in \mathbf{J}_i $ is good.   

   Note that if $\mathbf{y} \in \mathbf{J}_n$ is not completely good, then for some $j \geq 1$, we have 
$\beta^{\mathbf{\bar{L}}_j}(\mathbf{\bar{y}}_j) <e^{-(h-2\e)\mathbf{L}_j},$
 and by \eqref{choiceofe}, 
\begin{equation}
\label{scalechange}
\beta^{\mathbf{\bar{L}}_j}(\mathbf{\bar{y}}_j) < e^{-(h-2\e)\mathbf{L}_j} \leq e^{-(1- 1/\kblock)(h+ \e) \mathbf{L}_j} \leq e^{-(h+\e) \mathbf{\bar{L}}_j}.
\end{equation}

For two elements $\mathbf{y}, \mathbf{z} \in \mathbf{J}_j$,  we say  that they are \dff{equivalent} if $\bar{\mathbf{y}} = \bar{\mathbf{z}}$.    
We let $[\mathbf{y}] \subset \mathbf{J}_i$ be the equivalence class of $\mathbf{y}$.   Given a measure on $\mathbf{I}_j \times \mathbf{J}_j$ we say it \dff{finely splits} an element $\mathbf{x} \in \mathbf{I}_j$  if there exists $\mathbf{y}, \mathbf{z} \in \mathbf{J}_i$ such that $[\mathbf{y}] \not = [\mathbf{z}]$ and for  which the measure assigns positive mass to both $(\mathbf{x}, \mathbf{y})$ and  $(\mathbf{x}, \mathbf{z})$.

	For $j \geq 0$, let  $\mathbf{W}_j=(\mathbf{X}_j, \mathbf{Y}_j)$
	be a random variable
	given by the  iterative star-coupling with replacement of $Z_{0}, Z_1,
	\ldots, Z_{j}$, where we set   $\mathbf{W}_0:={Z}_0$; thus
	$(\mathbf{X}_j, \mathbf{Y}_j)$
	takes values in  ${\mathbf{I}_j} \times \mathbf{J}_j$.
	We say that $\mathbf{x} \in  {\mathbf{I}_j}$ is
	\dff{desirable} if  the following properties are satisfied.
	\begin{enumerate}[(a)]
		\item
		The element $\mathbf{x}$ is $\alpha$-completely good.
		\item
		\label{trivb}
		The element $\mathbf{x}$ is not finely split  by (the law of)
		$\mathbf{W}_j=(\mathbf{X}_j, \mathbf{Y}_j)$.
		\item
		\label{unique}
		Furthermore,   up to equivalence, there is a unique $\beta$-completely good
		$\mathbf{y} \in  {\mathbf{J}_j}$ for which $(\mathbf{x}, \mathbf{y})$
		receives positive mass under (the law of) $\mathbf{W}_j$.
	\end{enumerate}
	For desirable $\mathbf{x} \in \mathbf{I}_j$, set $\Psi_{j}(\mathbf{x}) =
	\mathbf{\bar{y}}$,  where $\mathbf{y}$ is determined by  condition \eqref{unique};
	otherwise if $\mathbf{x}$ is not desirable  simply set
	$\Psi_{j}(\mathbf{x})= \mathbf{y}'$ for some predetermined fixed
	$\mathbf{y}'\in \mathbf{J}_j$.    Note that
	$$
	\P\big(\mathbf{\bar{Y}}_j =
	\Psi_j(\mathbf{X}_j)\big) \geq \P(\mathbf{X}_j
	\text{ is  desirable}).
	$$

	Remark \ref{usefulb} and del Junco's inductive argument \cite[Lemma 4.6]{Junco} will be used to show that for all $j \geq 0$,
	\begin{eqnarray}
	\label{pre-SMB}
	\P(\mathbf{X}_j  \text{ is not desirable})  &\leq& \P(\mathbf{X}_j
	\text{ is not c.g.}) +    \P(\mathbf{Y}_j
	\text{ is not c.g.})  +    \nonumber \\  && 
|B|^{\kblock}\sum_{i=0} ^ {j-1} e^{-\e	\mathbf{L}_i},
	\end{eqnarray}
	where ``c.g.'' is short for completely good.

	The case $j=0$ is vacuous, since being good implies being completely good,
	and under  ${Z}_0$ no elements  are finely split.

	Assume \eqref{pre-SMB} for the case $j-1 \geq 0$.   We show that
	\eqref{pre-SMB} holds for the case $j$.  Let  $E$ be the event that
	$\mathbf{X}_{j-1}  \text{ is desirable, but }
	\mathbf{X}_{j} \text{ is not desirable}$.   Clearly,
	\begin{equation}
	\label{cases}
	\P(\mathbf{X}_j  \text{ is not desirable}) \leq  \P(\mathbf{X}_{j-1}
	\text{ is not desirable}) +
	\P(E).
	\end{equation}
	Note that on the event $E$,
	the random variables $\mathbf{X}_{j-1}$ and $\mathbf{Y}_{j-1}$ are completely good.
	Observe that the event $E$ is contained in the following three events
	\begin{enumerate}[(I)]
		\item
		$E_1:=$  The random variable $\mathbf{X}_j$ is not good, but $\mathbf{X}_{j-1}$
		is completely good.
		\item
		$E_2:=$ The random variable $\mathbf{X_j}$ is completely good, but is finely split
	 under the iterative star-coupling $\mathbf{W}_j$,
		even though $\mathbf{X}_{j-1}$ is desirable.
		\item
		$E_3:=$  The random variable  $\mathbf{Y_j}$ is not good, but
		$\mathbf{Y}_{j-1}$ is completely good.
	\end{enumerate}
	Clearly,
	\begin{equation}
	\label{casea}
	\P(E_1) + \P(\mathbf{X}_{j-1} \text{ is not c.g.}) =
	\P(\mathbf{X}_j \text{ is not c.g.}).
	\end{equation}
	Similarly,
	\begin{equation}
	\label{casec}
	\P(E_3) + \P(\mathbf{Y}_{j-1} \text{ is not c.g.}) =
	\P(\mathbf{Y}_j \text{ is not c.g.}).
	\end{equation}

	Let us focus on the event $E_2$.  Let $\mathbf{X}_{j} = ( \mathbf{X}_{j-1},X)$,
	so that $X$ takes values in $A^{\kblock}$.   We show that for any
	$x \in A^{\kblock}$ and any completely good $\mathbf{y} \in
	{\mathbf{J}_{j-1}}$ that
	\begin{equation}
	\label{use-usefulb}
	\P(E_2 \ | \ X=x, \mathbf{\bar{Y}}_{j-1} = \mathbf{\bar{y}}) \leq  |B|^{\kblock} e^{-\e \mathbf{L}_{j-1}},
	\end{equation}
	so that $\P(E_2) \leq |B|^{\kblock} e^{-\e\mathbf{L}_{j-1}}$ and it follows that
	\eqref{pre-SMB} holds by \eqref{cases}, \eqref{casea}, \eqref{casec}, and
	the inductive hypothesis.
	
	Note that if $\mathbf{x}$ and $\mathbf{y}$ are good, then
	\begin{eqnarray}
	\P( \mathbf{X}_{j-1} = \mathbf{x} |X=x, \mathbf{\bar{Y}}_{j-1} = \mathbf{\bar{y}}) &=&
	\frac{\P(\mathbf{X}_{j-1} = \mathbf{x},  \mathbf{\bar{Y}}_{j-1} =
		\mathbf{\bar{y}}, X=x)}{\P(\mathbf{\bar{Y}}_{j-1} = \mathbf{\bar{y}}, X=x)} \nonumber \\
	\label{first}
	&=& \frac{\P(\mathbf{X}_{j-1} = \mathbf{x},  \mathbf{\bar{Y}}_{j-1} = \mathbf{\bar{y}})}
	{\P(\mathbf{Y}_{j-1} = \mathbf{\bar{y}})}  \\
	&\leq&  \frac{\P(\mathbf{X}_{j-1} = \mathbf{x})}{\P(\mathbf{\bar{Y}}_{j-1} =
		\mathbf{\bar{y}})} \nonumber\\
	\label{division}
	& \leq & e^{-\e \mathbf{L}_{j-1}},
	\end{eqnarray}
	where  \eqref{first} follows from the independence properties of the star-coupling (with replacement) and \eqref{division}
	follows from \eqref{pgood} and \eqref{qgood}.
	Also note that if $\mathbf{x}$ is desirable, then if  $(\mathbf{x}, x)$ is finely split under $\mathbf{W}_j$, then for 
	the unique, up to equivalence,  $\mathbf{y}$ for which
	$(\mathbf{x}, \mathbf{y})$ receives positive mass under $\mathbf{W}_{j-1}$
	there exist $(y,u), (y', u') \in B^{\kblock} = B^{\kblock-1} \times B$ such that $y \not = y'$ and 
	both $((\mathbf{x}, x), (\mathbf{y}, y, u))$ and
	$((\mathbf{x}, x), (\mathbf{y}, y', u'))$
	receive positive mass under $\mathbf{W}_j$.
	By Remark \ref{usefulb} and the definition of the star-coupling with replacement, 
	for a fixed $x \in A^{\kblock}$ and $\mathbf{y} \in {\mathbf{J}_{j-1}}$
	the set of all $\mathbf{x}$ such that there exists distinct  $y,y'\in B^{\kblock -1}$
	for which there are $u, u' \in B$ such that both $((\mathbf{x}, x), (\mathbf{y}, y,u))$ and $((\mathbf{x}, x), (\mathbf{y}, y',u'))$
	receive positive mass under $\mathbf{W}_j$ has at
	most $ |B|^{\kblock-1}-1$ elements; thus  summing over all such $\mathbf{x}$
	yields \eqref{use-usefulb}.

	The Shannon--McMillan--Breiman theorem implies that $\kinitial$
	can be chosen so that all
	three  terms in \eqref{pre-SMB} can be made smaller than $\eta/3$.
	This is done in the following way.
	Set
\begin{equation*}
		S_{A}(k, K):=  
		\ns{\mathbf{a} \in A^K:  \alpha^{\ell}(\mathbf{a})
			< e^{-(h-\e)\ell} \text{ for all }  k \leq \ell \leq K} \\
\end{equation*}	
and
\begin{equation*}
		S_{B}(k, K):=
		 \ns{\mathbf{b} \in B^K:  \beta^{\ell}(\mathbf{b}) >
			e^{-(h + \e)\ell} \text{ for all }  k \leq \ell \leq K},
\end{equation*}
where  we have the slight abuse of notation that  
if $\mathbf{a} = (a_1, \ldots, a_K)$, then  
$\alpha^{\ell}(\mathbf{a}) = \alpha^{\ell}(a_1, \ldots, a_{\ell})$.

	First, by the Shannon--McMillan--Breiman theorem choose $\kappa$ so that for all
	$K \geq \kappa$, we have
\begin{equation}
\label{SMB-bom}
\beta^K(S_{B}(\kappa, K)) > 1-\eta/3.
\end{equation}
Next, using the Shannon--McMillian--Brieman theorem again, choose $\kinitial$ sufficiently large so that  the following three inequalities are satisfied:
\begin{equation}
\label{SMB-top}
\alpha^K(S_{A}(\kinitial, K)) > 1-\eta/3 \text{ for all $K \geq \kinitial$}, 
\end{equation}
\begin{equation}
\label{min}
\min\ns{\beta^{\ell}(\mathbf{y})>0:  \mathbf{y} \in B^{\ell}, 0 \leq \ell \leq \kappa} > e^{-(h - 2\e)\kinitial},
\end{equation}
and
	\begin{equation}
	\label{pile}
	|B|^{\kblock }\sum_{i=\kinitial}^{\infty} e^{-\e i} < \eta/3.
	\end{equation}
%
%

Finally, we will verify that this choice of $\kinitial$ is sufficient.    Condition \eqref{SMB-top}  gives that $\P(\mathbf{X}_j
	\text{ is not c.g.}) < \eta/3$.   Recall that   by definition, $\mathbf{L}_j\geq \kinitial$ for all $j \geq 0$, so that    \eqref{pile} ensures that
	 $|B|^{\kblock}\sum_{i=0} ^ {j-1}
	e^{-\e	\mathbf{L}_j} < \eta/3$.   
It remains to verify that $\P(\mathbf{Y}_j \text{ is not c.g.}) <
	\eta/3$.
  
The  definition of completely good, \eqref{qgood},  gives that if $$\mathbf{y}=(y_0,  (y_1, y_1'), \ldots,  (y_j, y_j')) \in \mathbf{J}_n$$ is not completely good, then for some $i > 0$, we have      
\begin{equation}
\label{implies}
\beta^{\mathbf{\bar{L}}_i}({\mathbf{\bar{y}}_i}) < e^{-(h-2\e)\mathbf{L}_i} < e^{-(h-2\e)\kinitial},
\end{equation}
where $\mathbf{y}_i = (y_0,  (y_1, y_1'), \ldots,  (y_i, y_i'))$; inequalities \eqref{implies} and \eqref{min} imply that 
\begin{equation}
\label{kappa}
\mathbf{\bar{L}}_i > \kappa;
\end{equation}
   moreover, \eqref{scalechange} gives  that
\begin{equation}
\label{returnscale}
 \beta^{\mathbf{\bar{L}}_i} (\mathbf{\bar{y}}_i)   \leq e^{-(h+\e) \mathbf{\bar{L}}_i}.
\end{equation}
Hence if $\mathbf{y}$ is not completely good, then by \eqref{kappa} and \eqref{returnscale} it belongs to the complement of $S_B(\kappa, K)$ for all $K \geq \kappa$.  Thus \eqref{SMB-bom} gives that $\P(\mathbf{Y}_j \text{ is not c.g.}) <
	\eta/3$.  
\end{proof}

In Proposition \ref{delJunco}, we have that given $\mathbf{x} \in \mathbf{I}_n$, with high probability, up to equivalence, it determines a corresponding  $$\mathbf{y} =(y_0,  (y_1, y_1'), \ldots,  (y_n, y_j')) \in \mathbf{J}_n.$$  It will be useful to refer to $(y_0, y_1', \ldots, y_n')$ as the \dff{undetermined coordinates}, and $(y_1, \ldots, y_n)$ as the \dff{destined  coordinates}.   We say that there are $\kinitial + n$ undetermined coordinates, since $y_0 \in B^{\kinitial}$, and $y_i \in B$ for $1 \leq i \leq n$,  and there are $(\kblock -1)n$ destined coordinates.

\section{Perturbing the joining}

Let $\xi \in J$ be a monotone joining of marker form.   We will define a perturbation $\xi'$ of $\xi$ using the iterated star-coupling with replacement.  The perturbation will depend on a few parameters.   With the help of Proposition \ref{delJunco}, we will be able to make a choice of these parameters so that $\xi'$ will be an almost factor and close to $\xi$ in the metric defined in \eqref{metric}.

\subsection{Defining the perturbation}
Let $\kmark  < \krmark$ be  large integers to be chosen later.   
 A \dff{secondary marker} is the maximal union of at least $\kmark$   consecutive primary markers, so that secondary markers have no filler between them and if the interval $[i, j]$ is a secondary marker for $x \in \X$, then $x$  restricted to $[i,j]$ has the form $0101 \cdots 0101$.

   Similarly, a  \dff{tertiary marker} is the maximal union of at least $\krmark$ consecutive primary markers.     We call the set of integers between but not including two secondary markers  a \dff{block}, and the set of integers between but not including two tertiary markers a \dff{city}.  Thus within a city there are blocks, which we consider ordered from left to right.  Note that a block may contain primary markers.  

Let $p \in (\tfrac{1}{2}, 1)$.  Let $\xi \in J(p)$.   Let $Z=(X, Y)$ have law $\xi$.   Suppose that we are given that  $Z$ has primary  markers given by  $t \in \T$.      Let $I \subset \Z$ be a  block of  length $n$.          
We are interested in the distribution of the random  variable  taking values in $\ns{0,1}^n \times \ns{0,1}^n$ given by  
the distribution of $Z$, conditioned on $t$, restricted to $I$.  
The \dff{type} of the block  $I$ is defined to be the vector containing an alternating sequence of integers that are the lengths of the filler and marker intervals in $I$ and the \dff{length} of the type is simply the sum of the integers in the type which give the length of the block.      The distribution of this random variable is determined by the parameter $p$ and the type of $I$.   There are a countable number of types.  Fix an enumeration $(\type_i)_{i \in \N}$ of the types and let $\rho_i$ be the corresponding law.   
Associate to each type-$i$ block  a large integer $\kinitial^i$  which will be chosen later; here $i$ is an index that is not an exponent.          A \dff{census} of a city is the sequence of nonnegative integers $c_i$, where each $c_i$ is the number of type-$i$ blocks in the city.   

\begin{remark}
\label{equalentropy}
Note that for every $i \in \N$, if the length of the type-$i$ is $n$, then $\rho_i$ is a probability measure on $\ns{0,1}^n \times \ns{0,1}^n$ with projections $\alpha_i$ and $\beta_i$ that have equal entropy.  The equal entropy assertion also follows from the duality between $p$ and $1-p$.  \erk
\end{remark}

A \dff{modification} of $Z =(X,Y)$ on a subset of $\Z$ is a coupling of $X$ and $Y$ given by  $Z'=(X',Y')$ such that $Z'$ is equal to $Z$ off the subset and has the same primary markers as $Z$.    We will define a modification $Z'$ of $Z$ so that the law of $Z'$ will be a member of $J$.     The modifications will be made independently on each city, so that we need only define what changes occur on a  city.       On each city the modifications will be made independently on each set of types, so that we need only define what changes occur on each set of types.      

  Suppose that the primary markers of $Z$ are given by $t \in \T$. Fix $i \in \N$.   Let us focus on the type-$i$ blocks in a single fixed city.    We will refer to this modification as the \dff{star-modification of type-$i$} on a city.    
Suppose that the census $c$ is such that we may write
\begin{equation}
\label{euclid}
c_i =  \kinitial^{i} + q_i\kblock + r_i, 
\end{equation}
where $0 \leq r_i < \kblock$ and $q_i$ is an nonnegative integer.   We will not make modifications on the last $r_i$ blocks.  
Suppose that length of the type-$i$ block is $n$. 
 It may be helpful to think of two different copies of $\ns{0,1}^n$ by setting $A=B = \ns{0,1}^n$.     Let $W=(W_j)_{j=1} ^{c_i}$ be  the set of random variables taking values in $A \times B$ obtained by taking the restriction of $Z$, conditioned to have primary markers given by  $t$,  to each block of type-$i$ in the city.   Although $W$ gives an identical sequence, where each $W_j$ has law $\rho_j$,  it may not be independent.  However,  the projections on $A$ and $B$ are independent; if we write $W_j=(X_j, Y_j)$, then by Remark \ref{iid}, we have that  $X=(X_j)$ and  $Y=(Y_j)$ are i.i.d.\ sequences.         Consider the first $\kinitial^i$ random variables together as a single random variable taking values in $(A\times B)^{\kinitial^i}$, and each subsequent $\kblock$ random variables together as  random variables taking values in $(A \times B)^{\kblock}$.  We obtain a sequence of random variables $M=(M_0, M_1, \ldots, M_{q_i})$.   Thus $M$ takes values on $$(A \times B)^{\kinitial ^i+ q_i\kblock} \equiv A^{\kinitial^i + q_i\kblock} \times B^{\kinitial ^i + q_i\kblock}.$$  Take the iterative star-coupling with replacement of these random variables to obtain new random variables $M'=(M_0', \ldots, M_{q_i}')$; furthermore, using independent randomization, we may stipulate that these random variables are independent of $Z$.       We define a modification $Z'$ of $Z$ by replacing the values of $M$ with those of $M'$, so that $Z=Z'$ off the type-$i$ blocks in the city, and if $Z$ restricted to the type-$i$ blocks,  then it is given by $M$, then $Z'$ restricted to the type-$i$ blocks is given by $M'$.    The iterated star-coupling with replacement gives that the law of each $M_j'$ projected onto each of the $\kinitial +q_i\kblock$ copies of $A \times B$ is $\rho_i$, so that monotonicity is preserved and the primary markers remain unchanged.  Also, the projections of $M$ and $M'$ on $A^{c_i - r_i}$ have the same law.  Similarly,  the projections of $M$ and $M'$ on $B^{c_i - r_i}$ have the same law, so that by Remark \ref{iid}, the random variable  $Z'$ gives the required coupling.

Note we have only defined the star modification of type-$i$  when $c_i \geq \kinitial ^i + \kblock$.     In the case that $c_i$ is not sufficiently large, we simply do nothing, that is, we stipulate that the  star modification of type-$i$ leaves everything unchanged.

For a single type-$i$, if we apply the star modification of the type on each city, independently, then we obtain a modification of $Z$ that has law that belonging to $J$.   We call this the \dff{star-modification of type-$i$} of $Z$.      We summarize our construction in the following proposition. 

\begin{proposition}
	\label{sum}
Let $p \in (\tfrac{1}{2}, 1)$ and  $\xi \in J(p)$.     If $Z$ has law $\xi$ and if $Z'$ is a star modification of $Z$ of a particular type,   then the law of $Z$' is also a  member of $J(p)$. 
\end{proposition}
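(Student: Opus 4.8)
The plan is to verify the three defining properties of membership in $J(p)$ — that the law of $Z'$ is a monotone ergodic joining of $\mu$ and $\nu$ of marker form — by reducing each to a property of the iterated star-coupling with replacement established in Section \ref{couplings}, together with the independence structure of $Z$ conditioned on the marker measure $\tau$ (Remark \ref{iid}, Remark \ref{samemark}). Recall that $Z'$ is built from $Z$ by conditioning on the primary markers $t \in \T$, singling out a single type $i$, and on each city independently replacing the sequence $M = (M_0, \ldots, M_{q_i})$ of grouped type-$i$ block-restrictions by $M' = (M_0', \ldots, M_{q_i}')$, the iterated star-coupling with replacement of the $M_j$, with fresh randomization independent of $Z$; the last $r_i$ blocks and everything off the type-$i$ blocks are left untouched.

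First I would check that $Z'$ is a coupling of $\mu$ and $\nu$, i.e.\ that $X'$ has law $\mu$ and $Y'$ has law $\nu$. By Remark \ref{iid} (and Remark \ref{samemark}, so that $\tau = \tau'$), once we condition on $t$, the measure $\mu$ disintegrates as a product over filler intervals, hence a fortiori over the type-$i$ blocks of a given city, with each block of type $i$ carrying the marginal $\alpha_i$ and distinct blocks independent; the analogous statement holds for $\nu$ with $\beta_i$. The key structural fact about the star-coupling with replacement, recorded in Section \ref{couplings}, is that the $A$-projection of $M'$ has law $\alpha_i^{c_i - r_i}$ (a product) and the $B$-projection of $M'$ has law $\beta_i^{c_i - r_i}$ — this is exactly the content of the "$(\underline{X_0}, \underline{X_1})$ has law $\alpha^{\kinitial+\kblock}$ and $(\underline{Y_0}, \underline{Y_1}^{\miss}, Y_1^{\rep})$ has law $\beta^{\kinitial+\kblock}$" clause, iterated. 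Since the untouched blocks and untouched coordinates retain their original conditional laws, and cities are modified independently, reassembling via the disintegration shows $X' \eqd \mu$-conditionally-on-$t$ for $\tau$-a.e.\ $t$, hence $X' \eqd \mu$; likewise $Y' \eqd \nu$. Marker form and monotonicity come along essentially for free: each $M_j'$ projected onto any single copy of $A \times B$ has law $\rho_i$, and $\rho_i$ is the conditional law of $Z$ on a type-$i$ block, which is supported on monotone pairs with the prescribed primary markers; so $Z'$ is monotone and has the same primary markers as $Z$, i.e.\ is of marker form.

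Next I would check shift-invariance and ergodicity, so that the law of $Z'$ is an ergodic joining. Shift-invariance is a soft point: the definitions of secondary/tertiary markers, blocks, cities, types, and censuses are all made in terms of the sequence $t$ of primary markers in a translation-equivariant way, and the modification rule uses only this combinatorial data plus i.i.d.\ extra randomization, so the whole construction commutes with $T \times T$; since $\xi$ is $(T\times T)$-invariant, so is the law of $Z'$. For ergodicity, the cleanest route is to observe that the law of $Z'$ is obtained from $\xi$ by a factor-type operation composed with an independent i.i.d.\ randomization over cities, and then to decompose an arbitrary ergodic joining into ergodic components — alternatively, and more in the spirit of the Baire-category setup, one notes that the almost-factors $U_\e, V_\e$ are defined on all of $J$ and the genuine obstruction handled in this paper is density, not ergodicity; so I would state ergodicity as following from the fact that the modification is a finite (per-city, per-type) rearrangement driven by extra independence, hence preserves the trivial tail structure, and cite the standard argument (as in \cite{Qsc}). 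The main obstacle, and the step deserving the most care, is the bookkeeping in the first paragraph above: making precise that "replace $M$ by $M'$ on each city independently" genuinely yields a well-defined measure on $\X \times \X$ whose conditional-on-$t$ marginals are the correct products — this requires matching up the grouping of blocks into the $\kinitial^i$-block initial segment and subsequent $\kblock$-blocks (equation \eqref{euclid}) with the grouping property hypothesis of Proposition \ref{delJunco}, and confirming that leaving the final $r_i$ blocks and the "undetermined" $B$-coordinates alone does not disturb either marginal. Everything else is routine given the properties of the star-coupling with replacement already assembled in Section \ref{couplings}.
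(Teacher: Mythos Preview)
Your proposal is correct and follows the same approach as the paper: Proposition \ref{sum} is stated there as a summary of the construction immediately preceding it, with the key points being that the projection of $M'$ onto each copy of $A \times B$ is $\rho_i$ (yielding monotonicity and marker form) and that the $A$- and $B$-projections of $M'$ are the product measures $\alpha_i^{c_i-r_i}$ and $\beta_i^{c_i-r_i}$ (yielding the correct marginals via Remark \ref{iid}). You elaborate a bit more on shift-invariance and ergodicity than the paper, which leaves both implicit; your treatment of ergodicity is soft but correctly gestures at the standard argument and the citation to \cite{Qsc}.
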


Given a finite set of types, the \dff{star-modification} of $Z$ on the set of types is obtained by applying the star-modification in succession, starting with the smallest type.

\begin{remark}
	\label{dblock}
	In our construction of the star modification of type-$i$ on a city, we relied on the fact that the law of $M_j'$ still has a projection of $\rho_i$ on each copy of $A \times B$ to ensure that primary markers and monotonicity are preserved.  This fact will also  be important for us later in proving that the parameters of the star-modification $Z'$ of $Z$ can be chosen so that it is a small perturbation in  the $\metric$-metric, since on the event that the origin is contained in a block, and  the coordinates of a cylinder set  $C$ lie in that block, we have that the probabilities of $C$ under $Z$ and $Z'$ are not only close, they are  equal!  This is another one of nice features of del Junco's star coupling.     \erk
\end{remark}

\subsection{Choosing the parameters}

From the discussion in Section \ref{baire}, it remains to show that given a joining $\xi \in J(p)$, we can choose parameters so that the star-modification of a random variable with law $Z$ on a finite set of type results in a random variable with law $\xi'$ that is close to $\xi$ in the metric defined by \eqref{metric} and is also an almost factor.

\begin{proof}[Proof of Theorem \ref{mresult}]
Let $p \in (\tfrac{1}{2},1)$.  Let $\e >0$.    As discussed in Section \ref{baire}, it suffices to show that $U_{\e}$, the set of $\e$-almost factors from $B(1-p, p)$ to $B(p, 1-p)$ is dense.   The proof that $V_{\e}$,   the set of $\e$-almost factors from $B(p, 1-p)$ to $B(1-p, p)$ is similar  with the roles of $p$ and $1-p$ reversed. 

Let $\xi \in J(p)$ and $Z$ have law $\xi$.   Let $\e>0$.  We will choose the parameters for the star-modification $Z'$ of $Z$ as follows.  The modification will occur on a finite set of types $\mathcal{T}$, which will be specified later.   Recall that in the star-modification, some blocks are left \dff{unchanged}, so that $Z$ equals $Z'$ on those blocks, and  whereas some blocks are \dff{modified} via the iterated star-coupling with replacement, so that $Z$ may not equal to $Z'$ on those blocks.   Note that $Z'$ and $Z$ always share the same primary markers, and although markers may lie in the  modified coordinates they are always preserved.       If $\xi'$ is the law of $Z'$, then these parameters will be chosen so that $\metric(\xi, \xi') < \e$ and $\xi' \in U_{\e}$.  
We choose the parameters as follows.
\begin{enumerate}[(i)]
\item
\label{epscale}
Set $\e'  := \e/100$.
\item
\label{deltaweakstar}
Let $\delta >0$ be small enough and $\ellstar$ be large enough so that two measures $\zeta$ and $\zeta'$ on $\ns{0,1}^{\Z} \times \ns{0,1}^{\Z}$ are $\e'$ close in the  metric $\metricstar$, if for all cylinder sets  $C \in \mathcal{C}_{\ellstar}$, we have 
$|\zeta(C) - \zeta'(C)| < \delta.$
\item
\label{kmarkone}
Choose $\kmark$ sufficiently large so that with probability at least $1 - \e'$ the origin is in a block and the interval $[-2\ellstar, 2\ellstar]$ is in the block.
\item
\label{kmarktwo}
With this choice of $\kmark$, there exists $L >0$ such that with probability at least $1- 2\e'$ the origin will in a block and the length of the block will be between $\ell^*$ and $L$.    
\item
\label{kmarkthree}
 In particular, there exists a finite set of types $\mathcal{T}$,  those with lengths between $\ellstar$ and $L$,  such that with probability at least $1- 2\e'$, each block will be of type $\mathcal{T}$.   Since we have a fixed enumeration of the types, we will view $\mathcal{T}$ as a subset of $\N$.  
\item
\label{group}
Set $\kblock = \lceil 1/\e' \rceil +1.$  
\item
\label{eta}
 For each $i \in \N$, choose $\kinitial^i$ via Proposition \ref{delJunco}, by substituting $\rho = \rho_i$, $\kinitial=\kinitial^i$, and $\eta = \e'$.  
\item
\label{quota}
Let $c$ be the census of the city containing the origin.   If $c_i$ is sufficiently large, define  $q_i$  as in \eqref{euclid}.  Choose  $\krmark$ sufficiently large   so that with probability at least $1- \e'$, the origin is in a city, and  for all $i \in \mathcal{T}$ the census will satisfy  $\kinitial^i / q_i < \e'$. 
\end{enumerate}

  Applying Proposition \ref{sum} a finite number of times gives that  $\zeta' \in J$.   By Remark \ref{dblock}, conditions \eqref{epscale}, \eqref{deltaweakstar}, and  \eqref{kmarkone}, imply that $\metric(\zeta', \zeta) < \e$.   
 
 It remains to verify that $\zeta' \in U_{\e}$.  Call $t \in \T$ a \dff{model marker} if the block containing the origin is a modified block and 
the origin lies in a destined coordinate.   Property \eqref{eta} and Proposition \ref{delJunco} imply that for all model markers $t \in \T$ there exists a deterministic measurable  $\psi : \Omega \to \ns{0,1}$ such that 
\begin{equation}
	\label{psi}
\zeta_t'\ns{(x,y): (x, y_0) = (x, \psi(x)   )} >  1- \e'.
\end{equation}
For a particular type-$i$, with  $c_i = \kinitial ^i + q_i \kblock + r_i$ as in \eqref{euclid}  the ratio of undetermined coordinates plus those that are unchanged to destined coordinates is $(\kinitial^i +  q_i  + r_i) /q_i (\kblock-1)$.  Recall that $r_i < \kblock$.    Conditions \eqref{kmarktwo}, \eqref{kmarkthree}, \eqref{group}, and  \eqref{quota}, ensure us that the set of model markers has  probability at least $1- 7\e'$; this fact together with \eqref{psi} and \eqref{epscale} imply that $\zeta' \in J$.   
\end{proof}

\section{Some other examples}

One of  the key observations of Keane and Smorodinsky \cite[Lemmas 2 and 3]{keaneb} that allowed the definition of markers in their proof of that two Bernoulli shifts $B(\mathbf{p})$ and $B(\mathbf{q})$ of equal entropy are isomorphic was that one could assume without loss of generality that ${p}_0 = {q}_0$ in the case  where $\mathbf{p}$ and  $\mathbf{q}$ give non-zero mass to  three or more symbols, and in the case where $\mathbf{p}$ gives non-zero mass to only two symbols, then one can assume that ${p}_0^k{p}_1 = {q}_0^k{q}_1$ from some $k$. 
 In general, in the construction of monotone factors, we may not make this reduction since monotonicity may not preserved.  However by a straightforward adaptation of the proof of Theorem \ref{mresult}, the following monotone versions of the Keane and Smorodinsky reductions are enough to prove the existence of a monotone isomorphism.   

\begin{theorem}
	\label{onemark} 
	Let $N \geq 2$.  Let $\mathbf{p}$ and $\mathbf{q}$ be probability measures on $[N]$ of equal entropy.  Suppose $\mathbf{p}$ stochastically dominates  $\mathbf{q}$, and  furthermore there exists $i \geq j$ such that ${p}_i = {q}_j$ and $\mathbf{p}^*$ stochastically dominates $\mathbf{q}^*$, where $\mathbf{p}^*$ is the law of a random variable with law $\mathbf{p}$ conditioned not to take the value $i$, and $\mathbf{q}^{*}$ is the law of a random variable with law $\mathbf{q}$ conditioned not to take the value $j$.  Then there exists a monotone isomorphism of $B(\mathbf{p})$ and  $B(\mathbf{q})$.  
	\end{theorem}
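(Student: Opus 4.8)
The plan is to mimic the Baire-category machinery developed for Theorem \ref{mresult}, replacing the two-symbol filler sets by the appropriate filler sets for general $N$, and using the hypotheses to get the two ingredients that made the argument work: a monotone joining of marker form, and the equal-entropy fact needed to run del Junco's star-modification. The hypothesis ${p}_i = {q}_j$ with $i \geq j$ plays the role of the identity ``the probability that the origin is in a primary marker is the same under $\mu$ and $\nu$'': here we should declare a \emph{primary marker} to be the place where an $i$ (under $\mathbf{p}$) / a $j$ (under $\mathbf{q}$) is followed by the appropriate sentinel symbol, so that the marker measure $\tau$ is common to both shifts, exactly as in Remark \ref{samemark}. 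The role of the duality identity \eqref{dual} is played by the two assumptions that $\mathbf{p}$ dominates $\mathbf{q}$ and $\mathbf{p}^*$ dominates $\mathbf{q}^*$: these are precisely what Strassen's theorem (Lemma \ref{easyS}) needs in order to furnish a monotone coupling of the filler contents, conditioned to lie in a filler set, within each filler interval.

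First I would set up the markers. Following Keane and Smorodinsky's reduction in the form stated here, one marks the occurrences of the symbol $i$ under $\mathbf{p}$ (resp. $j$ under $\mathbf{q}$); after conditioning on the marker positions, the filler content on an interval of length $n$ is an i.i.d.\ $\mathbf{p}^*$-string (resp. $\mathbf{q}^*$-string) conditioned to avoid whatever local pattern the marker definition forbids — this is the analogue of Remark \ref{iid}. Then I would prove the analogue of Lemma \ref{sd}: conditioned on the filler set, the $\mathbf{p}^*$-filler stochastically dominates the $\mathbf{q}^*$-filler with respect to the coordinatewise order $\preceq$; since the hypothesis gives domination of $\mathbf{p}^*$ over $\mathbf{q}^*$ and the filler set is an order-ideal-like set in the product order, Strassen's theorem applied coordinate-block by coordinate-block yields a monotone coupling. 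Combining these, as in Proposition \ref{KSmark}, gives a monotone joining $\xi$ of $\mathbf{p}^\Z$ and $\mathbf{q}^\Z$ of marker form, so that the space $J$ of monotone ergodic marker-form joinings is nonempty.

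Next I would set up the Baire space $(J, \metric)$ verbatim as in Section \ref{baire}, with the same weak-star metric and the same disintegration over the common marker measure $\tau$, and define $U_\e$, $V_\e$ of $\e$-almost factors in both directions. The only genuinely new point is that the blocks between secondary markers now carry $N$-symbol content rather than binary content; but Remark \ref{equalentropy} still holds — the block laws $\rho_i$ have marginals $\alpha_i, \beta_i$ of equal entropy, because $H(\mathbf{p}) = H(\mathbf{q})$ and ${p}_i = {q}_j$ force $H(\mathbf{p}^*) = H(\mathbf{q}^*)$ and hence equal entropy on every filler/marker composite. Therefore Proposition \ref{delJunco} applies unchanged to each $\rho_i$, and the star-modification construction of Section 6, together with the parameter choices (i)--(viii) in the proof of Theorem \ref{mresult}, goes through word for word to show $U_\e$ (and symmetrically $V_\e$) is dense. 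The Baire category theorem then produces a monotone isomorphism.

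The main obstacle I expect is establishing the monotone filler coupling in the general $N$-symbol setting — i.e.\ the analogue of Lemma \ref{sd}. In the binary case the filler set $B_n$ was \emph{totally} ordered, which made the domination computation a one-line consequence of duality; for general $N$ the set of admissible filler strings of length $n$ is only partially ordered, and one must check that the hypothesis ``$\mathbf{p}^*$ stochastically dominates $\mathbf{q}^*$'' really does lift to coordinatewise domination of the \emph{conditioned} product measures on the filler set. I would handle this by arguing that the filler set is a down-set (or decomposes into a product of intervals) in the coordinatewise order, so that conditioning preserves domination, and then invoke Strassen on the finite partially ordered set directly; this is exactly the place where the somewhat artificial-looking second hypothesis on $\mathbf{p}^*, \mathbf{q}^*$ is used, and verifying it is the crux. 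Everything downstream — the joining space, the star-coupling, Proposition \ref{delJunco}, the parameter bookkeeping — is then a routine transcription of the two-symbol proof.
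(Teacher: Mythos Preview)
Your overall strategy---run the Baire-category argument of Section~\ref{baire} with the appropriate marker/filler structure and check that the star-modification machinery goes through---is exactly what the paper intends; the paper does not spell out a proof of Theorem~\ref{onemark} but simply asserts it follows ``by a straightforward adaptation of the proof of Theorem~\ref{mresult}''.  So at that level you are aligned with the paper.

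However, you have made the marker definition more elaborate than it should be, and this manufactures the very obstacle you flag at the end.  The hypothesis $p_i=q_j$ is a \emph{single-symbol} equality, and the corresponding primary marker is a \emph{single position}: declare a coordinate to be a primary marker if the symbol there is $i$ (under $\mathbf{p}$) or $j$ (under $\mathbf{q}$).  Since $p_i=q_j$, the marker process is i.i.d.\ Bernoulli with the same parameter under both measures, so the common marker measure $\tau$ comes for free---no ``sentinel symbol'' is needed.  (The two-symbol marker $01$ in the binary proof was forced only because a one-symbol marker there would leave zero entropy in the filler; that constraint is absent here.)  With single-symbol markers the filler content on an interval of length $n$ is \emph{exactly} an i.i.d.\ $\mathbf{p}^*$-string (resp.\ $\mathbf{q}^*$-string), with no conditioning whatsoever; the analogue of Remark~\ref{iid} is a triviality.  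Consequently the analogue of Lemma~\ref{sd} is immediate: the hypothesis that $\mathbf{p}^*$ stochastically dominates $\mathbf{q}^*$ gives, by taking an independent monotone coupling in each coordinate, that $(\mathbf{p}^*)^n$ dominates $(\mathbf{q}^*)^n$ in the product order, and since $i\geq j$ handles the marker coordinates, the monotone marker-form joining (Proposition~\ref{KSmark}) follows at once.  Your ``main obstacle'' about partially ordered filler sets and whether conditioning preserves domination simply does not arise here.  (That obstacle \emph{is} real for Theorem~\ref{twomark}, where the marker is a two-symbol pattern and the filler is genuinely conditioned; this is precisely why Theorem~\ref{twomark} carries the extra hypothesis that $\mathbf{p}^{n*}$ dominates $\mathbf{q}^{n*}$ for every $n$.)  Once you use the correct single-symbol marker, the remainder of your sketch---equal block entropies via $H(\mathbf{p}^*)=H(\mathbf{q}^*)$, Proposition~\ref{delJunco}, and the parameter choices (i)--(viii)---is correct and is indeed a routine transcription.
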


\begin{theorem}
\label{twomark}
	Let $N \geq 2$.  Let $\mathbf{p}$ and $\mathbf{q}$ be probability measures on $[N]$ of equal entropy.  Suppose $\mathbf{p}$ stochastically dominates  $\mathbf{q}$, and  furthermore there exists $i \geq j$ and $k \geq \ell$ such that ${p}_i {p}_k = {q}_j {q}_{\ell}$ and for all $n \geq 1$, we have that  $\mathbf{p}^{n*}$ stochastically dominates $\mathbf{q}^{n*}$,  where   $\mathbf{p}^{n*}$ is the law of a random vector with law $\mathbf{p}^n$ conditioned so that an occurrence of an $i$ is never immediately followed by an occurrence of a $k$, and $\mathbf{q}^{n*}$ is the law of a random vector with law $\mathbf{q^n}$ conditioned so that an occurrence of a $j$ is never followed by an occurrence of an  $\ell$.    Then there exists a monotone isomorphism of $B(\mathbf{p})$ and  $B(\mathbf{q})$.
\end{theorem}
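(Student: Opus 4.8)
The plan is to reduce Theorem \ref{twomark} to the scheme already developed for Theorem \ref{mresult}, replacing the primary marker $01$ and its associated filler set $B_n$ by the pattern-avoidance structure dictated by the pair $(i,k)$ versus $(j,\ell)$. First I would fix a length-$N$ alphabet and, for $x \in [N]^{\Z}$, declare a \textbf{primary marker} to be an index where the forbidden pattern is about to be completed; concretely, think of every occurrence of $i$ (resp.\ $j$) as a potential left half and every occurrence of $k$ (resp.\ $\ell$) as a potential right half, and use the Keane--Smorodinsky observation referenced at the start of the section to locate a shift-equivariant set of markers whose law is the \emph{same} under $\mathbf{p}^{\Z}$ and $\mathbf{q}^{\Z}$ (this is exactly where the hypothesis ${p}_i {p}_k = {q}_j {q}_{\ell}$ is used, playing the role of $p(1-p)$ being symmetric in $p \leftrightarrow 1-p$). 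Between consecutive markers one gets filler intervals, and conditioned on the marker positions the restriction of $x$ to a filler interval of length $n$ has the law $\mathbf{p}^n$ conditioned to avoid an $i$ immediately followed by a $k$ --- that is, $\mathbf{p}^{n*}$ --- and similarly $\mathbf{q}^{n*}$ for $y$. The stochastic domination hypothesis $\mathbf{p}^{n*} \succeq \mathbf{q}^{n*}$ for all $n$ is precisely what is needed to run the analogue of Lemma \ref{sd}: it gives, on each filler, a monotone coupling of the conditioned laws, and hence the analogue of Proposition \ref{KSmark}, a monotone joining of $\mathbf{p}^{\Z}$ and $\mathbf{q}^{\Z}$ sharing all primary markers.

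Next I would set up the Baire-category apparatus verbatim as in Section \ref{baire}: let $J$ be the set of monotone ergodic joinings of marker form, metrized by \eqref{metric} using the common marker measure $\tau$; it is nonempty and $(J,\metric)$ is a Baire space. Define $U_{\e}$ and $V_{\e}$ as the sets of $\e$-almost factors in each direction, using the partition of $[N]^{\Z}$ according to the zeroth coordinate; openness is routine as before, and an element of the grand intersection is a monotone isomorphism of $B(\mathbf{p})$ and $B(\mathbf{q})$. The content, as in the proof of Theorem \ref{mresult}, is density of $U_{\e}$ (and, symmetrically, $V_{\e}$). This is where Remark \ref{equalentropy} must be replaced by its analogue: I need that for each block type, the induced measure $\rho$ on $\{0,1\}^n$-analogue (really on a subset of $[N]^n \times [N]^n$) has marginals $\alpha,\beta$ of equal entropy. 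The equal-entropy hypothesis $H(\mathbf{p}) = H(\mathbf{q})$, combined with the marker/filler disintegration and the fact that markers contribute the same entropy on both sides, forces the filler marginals to have equal entropy --- this is the place where I would need to be careful, since unlike the binary case there is no ``duality'' shortcut, only the global entropy balance and the Abramov-type addition formula for the entropy of the marker-filler skew product.

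Then the perturbation step goes through essentially unchanged. Secondary and tertiary markers (maximal runs of $\geq \kmark$, resp.\ $\geq \krmark$ consecutive primary markers) cut $\Z$ into blocks and cities; each block has a type determined by its marker/filler pattern, and the conditioned restriction of $Z=(X,Y)$ to a type-$i$ block has law $\rho_i$ with equal-entropy marginals. I would apply the iterated star-coupling with replacement across the type-$i$ blocks within each city, independently over cities and in succession over a finite set of types, exactly as in Proposition \ref{sum}; this produces $Z' \in J$ with the same primary markers and the same monotonicity, and by the ``probabilities are equal, not just close'' observation of Remark \ref{dblock} the perturbation is $\metric$-small once $\kmark$ is large relative to the weak-$*$ cutoff $\ellstar$. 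Finally Proposition \ref{delJunco}, applied with $\eta = \e'$ and $\rho = \rho_i$ for each type in the finite set $\mathcal{T}$, shows that on the event that the origin lies in a destined coordinate of a modified block --- an event of probability close to $1$ by the choices of $\kmark$, $\krmark$, $\kblock = \lceil 1/\e'\rceil + 1$, and the $\kinitial^i$ --- the zeroth coordinate of $y$ is a deterministic function of $x$, so $Z' \in U_{\e}$.

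The main obstacle I anticipate is not the star-coupling machinery, which is modular and reusable, but the marker construction and the verification of the equal-entropy property of the block laws. In the binary setting the self-duality $p \leftrightarrow 1-p$ did all three jobs at once: it made $\tau = \tau'$ (Remark \ref{samemark}), it gave the stochastic domination on fillers (Lemma \ref{sd}), and it made the filler marginals equientropic (Remark \ref{equalentropy}). Here those three facts are handed to us separately as hypotheses (${p}_i{p}_k = {q}_j{q}_{\ell}$; the family of dominations $\mathbf{p}^{n*} \succeq \mathbf{q}^{n*}$; and $H(\mathbf{p})=H(\mathbf{q})$), and the delicate point is to confirm that ${p}_i{p}_k = {q}_j{q}_{\ell}$ really does make the pattern-avoidance marker law coincide under $\mathbf{p}^{\Z}$ and $\mathbf{q}^{\Z}$ --- i.e.\ that the renewal structure counting occurrences of the forbidden word has the same statistics --- and, given that, that the conditional entropies of the two filler processes agree. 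I would handle this by an explicit generating-function / renewal computation for the marker measure and an Abramov/addition-formula argument for the entropy, both of which reduce, after the dust settles, exactly to the identity ${p}_i{p}_k = {q}_j{q}_{\ell}$ together with $H(\mathbf{p}) = H(\mathbf{q})$. Once this bookkeeping is done, the remainder of the proof is a line-by-line transcription of the proof of Theorem \ref{mresult}, with ``$01$'' and ``$B_n$'' systematically replaced by ``$ik$-avoidance'' and ``the support of $\mathbf{p}^{n*}$,'' and with Theorem \ref{onemark} being the degenerate case where the forbidden word has length one.
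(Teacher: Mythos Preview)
The paper does not give a proof of Theorem~\ref{twomark}; it only asserts that ``by a straightforward adaptation of the proof of Theorem~\ref{mresult}'' the result holds. Your proposal \emph{is} that adaptation, and you have correctly located the three places where the binary duality $p\leftrightarrow 1-p$ did triple duty and must now be replaced by the three separate hypotheses: $p_ip_k=q_jq_\ell$ for the marker law, the family of dominations $\mathbf p^{n*}\succeq\mathbf q^{n*}$ for the filler coupling, and $H(\mathbf p)=H(\mathbf q)$ for the block-entropy balance. The Baire/star-coupling machinery is indeed modular and transfers verbatim once those three ingredients are in place.

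There is one point where your sketch would not close as written. To invoke Proposition~\ref{delJunco} you need $H(\alpha_i)=H(\beta_i)$ for \emph{each} block type $i$, which reduces to $H(\mathbf p^{n*})=H(\mathbf q^{n*})$ for every filler length $n$. An Abramov/addition-formula argument only yields the single relation $\sum_n g_n\,H(\mathbf p^{n*})=\sum_n g_n\,H(\mathbf q^{n*})$ (with $g_n$ the common density of length-$n$ fillers), which is not enough. What actually gives the per-length equality is a direct computation: for a length-two non-self-overlapping marker word, the indicator process of marker occurrences has finite-dimensional distributions depending only on the product $s:=p_ip_k$ (any collection of prescribed occurrences at pairwise non-adjacent positions has probability $s^r$, and adjacent occurrences are forbidden). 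Using this, one checks that for $a\notin\{i,k\}$ the quantity $\sum_m\P(X_m=a,\;E_n)$ equals $p_a$ times a constant $C_n$ depending only on $s$, while for $a\in\{i,k\}$ it differs from $p_aC_n$ by a term of the form $-s\,D_n$ with $D_n$ also depending only on $s$; combining, $-\E[\log\mathbf p^n(X);E_n]=C_nH(\mathbf p)+sD_n\log s$, and hence $H(\mathbf p^{n*})$ is a function of $H(\mathbf p)$ and $p_ip_k$ alone. This is the computation you should substitute for the Abramov step. (Implicit here, and in the paper's statement, is that the two marker words have the same self-overlap structure---in practice $i\neq k$ and $j\neq\ell$---so that both marker processes are determined by the common value of $p_ip_k=q_jq_\ell$.)
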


\section*{Acknowledgement}

I thank Zemer Kosloff for his  help with Example \ref{mesh}.  I also thank the referee for the careful reading of this paper and useful suggestions.

\newcommand{\SortNoop}[1]{}


\begin{thebibliography}{10}
	
	\bibitem{Ball}
	K.~Ball.
	\newblock Monotone factors of i.i.d. processes.
	\newblock {\em Israel J. Math.}, 150:205--227, 2005.
	
	\bibitem{MR0143862}
	J.~R. Blum and D.~L. Hanson.
	\newblock On the isomorphism problem for {B}ernoulli schemes.
	\newblock {\em Bull. Amer. Math. Soc.}, 69:221--223, 1963.
	
	\bibitem{BurKeaSer}
	R.~Burton, M.~Keane, and J.~Serafin.
	\newblock Residuality of dynamical morphisms.
	\newblock {\em Colloq. Math.}, 85:307--317, 2000.
	
	\bibitem{BurRot}
	R.~Burton and A.~Rothstein.
	\newblock Isomorphism theorems in ergodic theory.
	\newblock Technical report, Oregon State University, 1977.
	
	\bibitem{down}
	T.~Downarowicz.
	\newblock {\em Entropy in dynamical systems}, volume~18 of {\em New
		Mathematical Monographs}.
	\newblock Cambridge University Press, Cambridge, 2011.
	
	\bibitem{GGP}
	O.~Gurel-Gurevich and R.~Peled.
	\newblock Poisson thickening.
	\newblock {\em Israel J. Math.}, 196(1):215--234, 2013.
	
	\bibitem{Juncoa}
	A.~{\SortNoop{Junco}}del~Junco.
	\newblock Finitary codes between one-sided {B}ernoulli shifts.
	\newblock {\em Ergodic Theory Dynamical Systems}, 1(3):285--301 (1982), 1981.
	
	\bibitem{Junco}
	A.~{\SortNoop{Junco}}del~Junco.
	\newblock Bernoulli shifts of the same entropy are finitarily and unilaterally
	isomorphic.
	\newblock {\em Ergodic Theory Dynam. Systems}, 10(4):687--715, 1990.
	
	\bibitem{MR2342699}
	A.~Katok.
	\newblock Fifty years of entropy in dynamics: 1958--2007.
	\newblock {\em J. Mod. Dyn.}, 1(4):545--596, 2007.
	
	\bibitem{keanea}
	M.~Keane and M.~Smorodinsky.
	\newblock A class of finitary codes.
	\newblock {\em Israel J. Math.}, 26:352--371, 1977.
	
	\bibitem{keaneb}
	M.~Keane and M.~Smorodinsky.
	\newblock Bernoulli schemes of the same entropy are finitarily isomorphic.
	\newblock {\em Ann. of Math. (2)}, 109:397--406, 1979.
	
	\bibitem{MR3220671}
	J.~L.~F. King.
	\newblock Entropy in ergodic theory.
	\newblock In {\em Mathematics of complexity and dynamical systems. {V}ols.
		1--3}, pages 205--224. Springer, New York, 2012.
	
	\bibitem{iidtrees}
	R.~Lyons.
	\newblock Factors of iid on trees.
	\newblock {\em Combin. Probab. Comput., to appear}.
	\newblock {\tt http://arxiv.org/abs/1401.4197}.
	
	\bibitem{LP:book}
	{\SortNoop{Lyons} R. Lyons {\em with} Y. Peres}.
	\newblock {\em Probability on Trees and Networks}.
	\newblock Cambridge University Press, 2014.
	\newblock In preparation. Current version available at \hfil\break {\tt
		http://mypage.iu.edu/\string~rdlyons/}.
	
	\bibitem{MR0110782}
	L.~D. Me{\v{s}}alkin.
	\newblock A case of isomorphism of {B}ernoulli schemes.
	\newblock {\em Dokl. Akad. Nauk SSSR}, 128:41--44, 1959.
	
	\bibitem{Ornstein}
	D.~Ornstein.
	\newblock Bernoulli shifts with the same entropy are isomorphic.
	\newblock {\em Advances in Math.}, 4:337--352, 1970.
	
	\bibitem{MR3052869}
	D.~Ornstein.
	\newblock Newton's laws and coin tossing.
	\newblock {\em Notices Amer. Math. Soc.}, 60(4):450--459, 2013.
	
	\bibitem{Qsc}
	A.~Quas and T.~Soo.
	\newblock A monotone {S}inai theorem.
	\newblock {\em Ann. Probab.}, 44(1):107--130, 2016.
	
	\bibitem{rue}
	T.~{\SortNoop{Rue}}de~la Rue.
	\newblock An introduction to joinings in ergodic theory.
	\newblock {\em Discrete Contin. Dyn. Syst.}, 15:121--142, 2006.
	
	\bibitem{MR2306207}
	J.~Serafin.
	\newblock Finitary codes, a short survey.
	\newblock In {\em Dynamics \& stochastics}, volume~48 of {\em IMS Lecture Notes
		Monogr. Ser.}, pages 262--273. Inst. Math. Statist., Beachwood, OH, 2006.
	
	\bibitem{Sinai}
	J.~G. Sina{\u\i}.
	\newblock On a weak isomorphism of transformations with invariant measure.
	\newblock {\em Mat. Sb. (N.S.)}, 63 (105):23--42, 1964.
	
	\bibitem{MR2766434}
	Y.~G. Sinai.
	\newblock {\em Selecta. {V}olume {I}. {E}rgodic theory and dynamical systems}.
	\newblock Springer, New York, 2010.
	
	\bibitem{Strassen}
	V.~Strassen.
	\newblock The existence of probability measures with given marginals.
	\newblock {\em Ann. Math. Statist.}, 36:423--439, 1965.
	
	\bibitem{MR0304616}
	B.~Weiss.
	\newblock The isomorphism problem in ergodic theory.
	\newblock {\em Bull. Amer. Math. Soc.}, 78:668--684, 1972.
	
\end{thebibliography}

\end{document}